\newcolumntype{P}[1]{>{\centering\arraybackslash}p{#1}}
\colorlet{shadecolor}{yellow!20}
\let\chapter\@undefined\makeatother 
\newtheorem{proposition}{Proposition}
\newtheorem{lemma}{Lemma}
\newtheorem{assumption}{Assumption}
\begin{document}

\setlist{noitemsep}  
\onehalfspacing      

\title{\textbf{Perturbed utility stochastic traffic assignment}}

\author{
    Rui Yao\\EPFL \and
    Mogens Fosgerau\footnote{Corresponding author: \href{mailto:mogens.fosgerau@econ.ku.dk}{mogens.fosgerau@econ.ku.dk}.}\\University of Copenhagen \and
    Mads Paulsen\\Technical University of Denmark \and
    Thomas Kjær Rasmussen\\Technical University of Denmark
        }

\date{} 

\renewcommand{\thefootnote}{\fnsymbol{footnote}}
\singlespacing
\maketitle

\vspace{-.2in}
\begin{abstract}
    This paper develops a fast algorithm for computing the equilibrium assignment with the perturbed utility route choice (PURC) model. Without compromise, this allows the significant advantages of the PURC model to be used in large-scale applications. We formulate the PURC equilibrium assignment problem as a convex minimization problem and find a closed-form stochastic network loading expression that allows us to formulate the Lagrangian dual of the assignment problem as an unconstrained optimization problem. To solve this dual problem, we formulate a quasi-Newton accelerated gradient descent algorithm (qN-AGD*). Our numerical evidence shows that qN-AGD* clearly outperforms a conventional primal algorithm as well as a plain accelerated gradient descent algorithm. qN-AGD* is fast with a runtime that scales about linearly with the problem size, indicating that solving the perturbed utility assignment problem is feasible also with very large networks.
\end{abstract}

\noindent
\textit{Keywords}: Perturbed utility, stochastic traffic assignment, dual algorithm, closed-form network loading, network route choice

\medskip


\thispagestyle{empty}


\onehalfspacing
\setcounter{footnote}{0}
\renewcommand{\thefootnote}{\arabic{footnote}}
\setcounter{page}{1}

\section{Introduction}\label{sec:intro}
Traffic assignment deals with the problem of allocating travel demands between a set of origin-destination (OD) pairs onto a congestible transportation network under specific behavioral assumptions \citep{sheffi1985urban}. This problem is central to transportation network planning and analysis.

The perturbed utility route choice (PURC) model \citep{Fosgerau2021a,Fosgerau2022} predicts the demand of a traveler as the network flow vector that solves a certain convex optimization problem. The model has a number of features that are very attractive for applications. It generates realistic predictions of equilibrium network flow with substitution patterns induced directly by the network structure, while allowing a priori any physically possible route in the transportation network without the need for choice set generation, and while predicting zero flow in irrelevant parts of the network. Not least, the PURC model can be estimated by linear regression.

The demand of individual travelers depends on the travel times in the network. The network is congestible, which means that travel times, in turn, depend on the aggregate demand on each link in the network. To make predictions, we generally impose the condition that demand and travel times are in equilibrium, i.e. that individual demands are those that result from the network travel times, while travel times are those that result from the aggregate demand of travelers. The equilibrium traffic assignment problem is to determine this point~\citep{sheffi1985urban}.

A naive approach to solving this problem would alternate between computing network flows conditional on travel times and updating travel times. This would, however, be extremely time-consuming as the network flow of each traveler is the solution to an individual convex optimization problem with as many free variables as there are links in the network. This approach would be particularly impractical in large networks since the problem complexity increases drastically with the number of travelers and network size.
This paper develops a much faster approach for computing the PURC traffic equilibrium by exploiting properties of the model, thus making it relevant for large-scale applications.

The first contribution of this paper is to formulate the traffic assignment problem as a single large constrained convex optimization problem. We show that the equilibrium solution exists uniquely. This means we can be sure always to find an equilibrium while not having to worry about the potential existence of multiple equilibria.

Second, we exploit the structure of the PURC model to derive a closed-form expression for the link flows as functions of the link costs and dual parameters corresponding to the flow conservation constraints. This enables us to formulate the Lagrangian dual of the primal traffic assignment problem in closed form as an unconstrained convex optimization problem. This dual traffic assignment problem is much faster to solve than the primal traffic assignment problem since it is unconstrained and involves only the dual variables as decision variables. 

Third, we propose a fast solution algorithm. We include the equilibrium condition for travel times as an auxiliary fixed-point problem and solve the dual traffic assignment problem conditional on travel times using the accelerated gradient descent (AGD) algorithm \citep{nesterov1983method,beck2009fast}. To avoid oscillation at later iterations, we employ the AGD* scheme~\citep{chambolle2015convergence}, which turns out to gain us considerable speed. Furthermore, using a quasi-Newton method, we scale the gradient by the Hessian diagonal to avoid line search for step sizes, thereby gaining additional speed. Furthermore, at each iteration, we propose to update link travel times by a Newton step such that the equilibrium condition for travel times is satisfied at convergence. We refer to the resulting algorithm as qN-AGD*.

We test the performance of our qN-AGD* algorithm with a range of networks of various sizes and find very satisfactory runtimes. Importantly, we find that the runtime scales about linearly with the size of the problem, i.e., the number of nodes times the number of origin-destination pairs, suggesting that our algorithm will perform well with very large problems.

The paper is organized as follows. Section \ref{sec:literature} comprises a literature review of stochastic assignment models and algorithms and their relations to perturbed utility models. Section \ref{sec:PURC} presents the link-based perturbed utility route choice model and an analysis of its solution properties. The perturbed utility-based stochastic traffic assignment model is formulated and analyzed in Section \ref{sec:PURC_TAP}, along with a closed-form network loading and an unconstrained dual assignment problem. Section \ref{sec:solution_algo} presents an efficient algorithm based on quasi-Newton accelerated gradient descent (qN-AGD*) for the dual formulation. Section \ref{sec:results} reports the results of the numerical experiments. Section \ref{sec:conclusion} concludes.

\section{Literature review}\label{sec:literature}
\subsection{Perturbed utility models}
The perturbed utility model \citep{McFadden2012, Allen2019} in its general form, describes consumer choice as a vector $x$ that maximizes a concave function $v^{\top}x- F(x)$, where $v$ is a vector of utility indexes and $F$ is the convex perturbation function. The consumption vector $x$ is constrained to lie in some budget set $B$. This general framework can represent a wide range of behavioral models as special cases \citep{Allen2019}. The perturbed utility model becomes a discrete choice model when the budget set is defined as the probability simplex. In particular, any additive random utility discrete choice model \citep{McFadden1981} can be represented as a perturbed utility model, where the perturbation function $F$ is the convex conjugate \citep{Rockafellar1970} of the surplus function of the additive random utility model \citep{Hofbauer2002}. The multinomial logit model is a perturbed utility model in which the perturbation function is the Shannon entropy \citep{Sorensen2019}.

In this paper, we employ the perturbed utility route choice model \citep{Fosgerau2021a}, described in detail in Section \ref{sec:PURC}. This is a perturbed utility model in which the budget set for a traveler is the set of network flow vectors that satisfy flow conservation of one unit demand traveling from the origin to the destination. Moreover, the convex perturbation function incorporates the network structure by being constructed as a sum of convex terms, one for each link in the network. As a consequence, correlation between alternatives is directly induced by the physical network structure.

\subsection{Stochastic traffic assignment models}\label{sec:lit_assignment_model}
A stochastic traffic assignment model predicts network flows in stochastic user equilibrium (SUE), under which no traveler can reduce their perceived travel cost by unilaterally changing their routing decision~\citep{daganzo1977stochastic}. A key building block for stochastic traffic assignment is the traveler route choice model that characterizes individual traveler behavior. There are two main challenges associated with route choice modeling: correlation between path utilities and choice set generation.

The generalized extreme value (GEV) model \citep{McFadden1978} applied to model route choice accounts for correlation in path utilities, as it allows a closed-form expression for capturing similarity between alternatives in the random error term~\citep{Prato2009}. Examples include the link-nested logit~\citep{Vovsha1998}, paired combinatorial logit~\citep{Chu1989}, and generalized nested logit~\citep{Wen2001}. Alternatively, in models such as C-logit~\citep{Cascetta1996} and path-size logit~\citep{Ben-Akiva1999,duncan2020path}, a correction term is added to the systematic utility function to capture the effect of overlapping paths. However, these models are typically path-based and therefore require explicit choice set generation, either ex-ante or by variants of column generation methods.

In general, the classical discrete choice models require a choice set to be defined. Route choice set generation is challenging in large networks where full path enumeration is not feasible. Deterministic generation methods construct a set of feasible paths with pre-defined rules like link penalty~\citep{de1993multidimensional}, and branch and bound~\citep{prato2006applying}. These deterministic methods are generally computationally efficient, but there is no guarantee they will reproduce the observed paths. Moreover, the restrictions involved in generating choice sets may cause bias in parameter estimation \citep{Frejinger2009}. Stochastic generation methods based on variants of random walk methods assume an underlying universal choice set~\citep{Frejinger2009,flotterod2013metropolis} can be used to avoid bias in parameter estimates but entails additional complications of path sampling. Under specific assumptions about the existence of a consideration set, recent developments in machine learning methods provide a data-driven approach to inferring the choice set generation process \citep{ton2018evaluating, yao2022variational}.

Link-based recursive route choice models avoid choice set generation by assuming travelers sequentially choose the next link at each node. This family of models includes the recursive logit (RL)~\citep{fosgerau2013link}, nested RL~\citep{mai2015nested}, and recursive network GEV~\citep{mai2016method}. However, maximum likelihood estimation of these recursive models is challenging, as it requires a bi-level procedure \citep{oyama2023capturing} where the Bellman equations must be solved many times at each evaluation of the likelihood. This can be time-consuming for large networks. Furthermore, a solution for the parameters may fail to exist~\citep{mai2022undiscounted}. \cite{oyama2023capturing} shows that the estimation of recursive route choice models can be facilitated by additional prism constraints, but at the cost of consistency of the parameter estimates. In contrast, the perturbed utility route choice model \citep{Fosgerau2021a} requires only linear regression for model estimation, while it is still capable of capturing correlations and avoids choice set generation.

A number of stochastic traffic assignment models have been proposed based on different route choice models.
\cite{fisk1980some} proposes a path-based constrained optimization formulation for multinomial logit by extending the \cite{beckmann1956studies} formulation with a path entropy term, whose optimal value satisfies the SUE condition.
\cite{sheffi1982algorithm} propose an unconstrained dual formulation that generalizes \cite{fisk1980some}'s formulation with a satisfaction function to allow other distributions. However, neither the function construction nor model estimation is trivial. To account for path correlations, \cite{bekhor1999formulations,bekhor2001stochastic} develop path entropy-based formulations for cross-nested logit (CNL), paired combinatorial logit (PCL) and generalized nested logit (GNL) models. For a given path set, these path-based SUE models are computationally efficient, but depend on a generated path set.

In order to avoid path set generation, \cite{akamatsu1997decomposition} propose a link-based assignment formulation equivalent to \cite{fisk1980some} with infinite path set, which decomposes the path entropy term with respect to link flow. The traveler route choice behavior in \cite{akamatsu1997decomposition} is indeed characterized by the RL model \citep{fosgerau2013link}.
The \cite{akamatsu1997decomposition} formulation will generally predict cyclic flows. This is caused by the Markov property in combination with the fact that it assigns positive probability to all out-going links \citep{akamatsu1996cyclic}. For the same reason, positive flow is assigned on all links in the network, which might be behaviorally questionable and computationally challenging. \cite{oyama2019prism} propose to constrain the Markovian assignment within a \textit{prism-shaped} state-extended subnetwork to limit cyclic flows by generalizing the branch-and-bound method~\citep{prato2006applying}.
\cite{oyama2022markovian} develop a general link-based assignment formulation for network generalized extreme value models (NGEV)~\citep{daly2006general}, which can be rewritten in terms of perturbed utility as
\begin{align}\label{eq:link_NGEV_formulation}
    \min_{x \in \Omega} \underbrace{\sum_a \int_{0}^{x_a} c(m) \,dm}_{C(x)} + \underbrace{\sum_w \theta^w \sum_{i} \sum_{j:(i,j)} x_{ij}^w \ln \left( \frac{x_{ij}^w}{\alpha_{ji}^w \sum_{j:(i,j)} x_{ij}^w}\right)}_{F(x)},
\end{align}
where $\Omega$ is the feasible region, $c(x)$ is the link cost as function of flow $x$, $\theta^w$ is the GEV scale parameter for OD $w$, and $\alpha_{ji}^w$ is the degree of membership of $j$ of nest $i$. Defining $C(x)$ and $F(x)$ as indicated and assuming suitable differentiability, the first-order conditions of this problem are equivalent to the first-order conditions of the individual perturbed utility maximization problems. This shows that the recursive NGEV is a perturbed utility model.

Although the recursive NGEV assignment model~\eqref{eq:link_NGEV_formulation}~\citep{oyama2022markovian} allows a wide range of GEV models, the underlying modeling assumptions lead to positive predicted OD flows on each link in the network and complicated model estimation. In contrast, we propose a stochastic traffic assignment model for a generic convex perturbation with a condition that induces the optimal flow to be zero on most links  \citep{Fosgerau2021a}, i.e. those that are not in the set of optimal paths.

\subsection{Assignment algorithms}
There exists a variety of solution algorithms for the stochastic traffic assignment problem, which find the equilibrium network flows (i.e., the SUE) corresponding to the optimal traveler route choice decisions.
These algorithms typically follow a two-step iterative procedure. The procedure first performs \textit{stochastic network loading} to determine the auxiliary optimal flows for the given costs (e.g., flow-independent logit loading \citet{dial1971probabilistic}); then updates the network flow \textit{assignment} based on the auxiliary solution  (e.g., the method of successive averages \citep[MSA,][]{powell1982convergence}) and updates the costs. Many GEV-based SUE models exploit a closed-form probability expression for performing efficient network loading~\citep[e.g.,][]{dial1971probabilistic,bekhor2009path,watling2018stochastic,oyama2022markovian}.
A key contribution of this paper is the derivation of a closed-form stochastic network loading function for a generic convex perturbation.

The \textit{stochastic network loading} step essentially provides the steepest descent direction for the stochastic traffic assignment problem. In the assignment step, flows are then updated with a suitable step size.
The well-known MSA uses a predetermined step-size sequence for iteratively updating flows~\citep{sheffi1982algorithm}. Several studies improve the convergence of MSA by using an adaptive step size scheme. For example, \cite{liu2009method} propose a self-regulated step size scheme that reduces the step size if the iterate tends to diverge. \cite{du2021faster} adapt the Barzilai-Borwein step size as a \textit{quasi-Newton} method for path-based SUE, which is determined by solving the secant equation with the step size being an approximation to the inverse Hessian matrix. On the other hand, gradient descent with momentum, accumulating the past gradients to guide the current iterate ~\citep{polyak1964some}, has been shown to speed up convergence \citep{lecun2015deep}. One variant of the momentum method, the accelerated gradient descent (AGD) method~\citep{nesterov1983method,beck2009fast,sutskever2013importance}, has demonstrated its efficiency in solving the recursive NGEV SUE problem~\citep{oyama2022markovian}. However, the original AGD method often exhibits an oscillatory convergence pattern, which detracts from its performance. \cite{chambolle2015convergence} proposed a modification of the AGD algorithm (termed AGD* in this paper) with superior practical performance and an improved theoretical convergence rate~\citep{attouch2016rate}.

Both AGD and AGD* typically require a backtracking line search procedure to find the step sizes. In this paper, we propose a quasi-Newton extension of the AGD* method that automatically scales the gradient with the Hessian diagonal, thus not only improving convergence but also avoiding any line search.

Most of the existing stochastic assignment algorithms consider the primal SUE formulation~\citep[e.g.,][]{fisk1980some,akamatsu1997decomposition,bekhor2001stochastic,oyama2019prism}, in which the decision variables are flows. The primal formulation requires the flows to satisfy conservation constraints, which can be handled with path-based gradient projection~\citep{jayakrishnan1994faster,bekhor2005investigating}. However, the projection in link flow space is not trivial. In contrast, the dual formulation reduces to an unconstrained optimization problem~\citep{daganzo1982unconstrained}, which allows efficient algorithms to be used for the link-based stochastic traffic assignment problems. Adding to existing studies with link-based dual formulations for GEV-based SUE~\citep{maher1998algorithms,xie2012stochastic,oyama2022markovian}, we propose a dual formulation for link-based perturbed utility stochastic traffic assignment problem with \textit{node potentials} being the decision variables, which typically have a smaller dimension than the link variables in a real network.

\section{The perturbed utility route choice model}\label{sec:PURC}
We begin by revisiting the perturbed utility route choice (PURC) model~\citep{Fosgerau2021a}. A network is given by $(\mathcal{V},\mathcal{E})$, where $\mathcal{V}$ is the set of nodes with typical element $v$, and  $\mathcal{E}$ is the set of directed edges (or links) with typical element $(i,j)$ for a link from node $i$ to node $j$.  We assume there exists at least one path between any two network nodes, i.e.
\begin{assumption}\label{A:3}
 The network $(\mathcal{V},\mathcal{E})$ is connected.
\end{assumption}

The node-link incident matrix $A\in \mathbb{R}^{|\mathcal{V}| \times |\mathcal{E}|}$ has entries
\begin{align*}
    a_{v, ij}=\begin{cases}
        -1,& v=i,\\
    1, &v=j,\\
    0, &\text{otherwise}.
    \end{cases}
    \end{align*}
A unit demand vector $b\in \mathbb{R} ^{|\mathcal{V}|}$ is given as
\begin{align*}
    b_v =
    \begin{cases}
        -1, & \text{if $v$ is the traveler's origin},\\
         1, & \text{if $v$ is the traveler's destination},\\
         0, &\text{otherwise}.
    \end{cases}
\end{align*}
A network link flow vector $x\in \mathbb{R}_+^{|\mathcal{E}|}$  satisfies flow conservation if $Ax=b$. 

A traveler in the PURC model is associated with a demand vector $b$, a vector of positive link costs\footnote{For simplicity, we deviate slightly from \citet{Fosgerau2021a} by not making explicit the dependence of link costs on link lengths. This is just a question of notation. } $c\in \mathbb{R}_{++}^{|\mathcal{E}|}$, and link-specific convex perturbation functions $F_e$, where

\begin{assumption}\label{A:1}
The perturbation function $F_e:\mathbb R_+ \rightarrow \mathbb R_+, \forall e \in \mathcal{E}$, is continuously differentiable, strictly convex, and strictly increasing, with $F_e(0)=F'_e(0)=0$ and range equal to $\mathbb R_+$. Define $(F'_e)^{-1} (y)=0$ for $y<0$ such that the inverse function of the derivative of the perturbation function has domain equal to $\mathbb R$.
\end{assumption}
For a flow vector $x\in \mathbb{R}_+^{|\mathcal{E}|}$, we define
\begin{align}\label{eq:sum_link_perturbations}
    F(x) = \sum_{e\in \mathcal{E}}  F_e(x_e)
\end{align} for the sum across links of perturbations $F_e(x_e)$.\footnote{\citet{Fosgerau2021a} defines link-specific perturbation functions by multiplying a basic perturbation function by link lengths. This ensures that the overall perturbation function $F$ is invariant with respect to link splitting.}  

The PURC model assumes that the traveler chooses link flow vector $x$ to minimize a perturbed cost function $c^\top x + F(x)$, under the flow conservation constraint.\footnote{In the route choice context, it is equivalent but more natural to talk about cost minimization rather than utility maximization. } Thus, the traveler's demand solves the following convex program.
\begin{subequations}\label{eq:PURC}
    \begin{align}
    \min_{x \in \mathbb{R}_+^{|\mathcal{E}|}} & c^\top x + F(x)\label{eq:PURC_objective}\\
    \text{s.t.}\quad & Ax = b.\label{eq:PURC_onservation}
\end{align}
\end{subequations}

It is an important feature of the program \eqref{eq:PURC} that the objective \eqref{eq:PURC_objective} is convex and separable by links. The coupling across links arises only through the linear conservation constraint \eqref{eq:PURC_onservation}. \cite{Fosgerau2021a} exploit this property to derive an estimation procedure that requires only linear regression.

\subsection{PURC solution properties}\label{sec:purc_solution_property}
To develop our fast assignment algorithm, we must establish some properties of the solution to the traveler's route choice problem. The program \eqref{eq:PURC} is convex with a strictly convex objective; hence, the solution exists uniquely. We indicate the optimal values of decision variables by `\textsuperscript{*}'. The complementarity condition for problem \eqref{eq:PURC} is
    \begin{align}\label{eq:comp_PURC}
         0 \leq x^*_{ij} \perp \left[ c_{ij} + F_{ij}'(x^*_{ij})+ \eta^*_j - \eta^*_i \right] \geq 0,\; \forall (i,j) \in \mathcal{E},
    \end{align}
where $\eta = \{\eta_v\}_{v\in \mathcal{V}}$ denotes the dual variable for the conservation constraint \eqref{eq:PURC_onservation}. The dual variable $\eta$ enters only as a difference $\eta_j^*-\eta_i^*$; hence it is determined only up to a constant. We are therefore free to impose the constraint that $\eta_d = 0$ at the traveler's destination node $d$.

We can derive an expression for the optimal link flows as a function of the dual variables $\eta_j$. For $x_{ij}^{*} > 0$, we have by complementarity and using the invertibility of $F_{ij}'$ that
    \begin{align}\label{eq:xij}
        x_{ij}^{*} = (F_{ij}')^{-1}\left(\eta_i^* - \eta_j^* - c_{ij}  \right), \; \forall (i,j) \in \mathcal{E}.
\end{align}
For $x^*_{ij}=0$, under Assumption~\ref{A:1}, we have similarly that
\begin{equation*}
    0 \leq F_{ij}'(x^*_{ij})+ c_{ij} + \eta^*_j - \eta^*_i = c_{ij} + \eta^*_j - \eta^*_i,
\end{equation*}
and hence Eq.~\eqref{eq:xij} holds also in this case. We will use this in the next section to find a closed-form network loading expression for the dual Lagrangian of the traffic assignment problem that we will formulate.

The dual variables $\eta^*_j$ have a very immediate and useful interpretation as they are equal to the shortest path marginal cost from node $j$ to the destination. To see this, note that $\eta^*_i \leq F_{ij}'(x^*_{ij})+ c_{ij} + \eta^*_j $ for all $(i,j)\in \mathcal{E}$. Then, by the complementarity condition \eqref{eq:comp_PURC}\begin{align}\label{eq:etamin}
    \eta^*_i = \min_{j :(i,j)\in \mathcal{E}} \left\{ \eta^*_j + F_{ij}'(x^*_{ij}) + c_{ij} \right\}.
\end{align}

\section{The perturbed utility-based traffic assignment problem}\label{sec:PURC_TAP}
We now set up the traffic assignment problem for the perturbed utility route choice model. We consider a general assignment setting with multiple traveler types and allow for arbitrary heterogeneity. We denote the set of traveler types by $\mathcal{W}$ with typical element $w$ and the volume of travelers of each type $w$ by $q^w$.

The network and the link travel times are common across types. Congestion in the network causes interaction between travelers. Otherwise, each type is assumed to behave according to its own perturbed utility route choice model \eqref{eq:PURC} as described in the previous section. We index the link cost functions, the perturbation functions, and the demand vectors by the type, $c_{ij}^w,F_{ij}^w$, and $b^w$.
The link cost function $c_{ij}^w$ is a type-specific function of link travel time $t_{ij}$. This allows e.g. type-specific preferences against tolls, while the link travel time is common to all types. We make the following assumptions.

\begin{assumption}\label{A:2}
The link cost functions $c_{ij}^w:\mathbb R_+ \rightarrow \mathbb R_+, (i,j)\in \mathcal{E}, w\in\mathcal{W} $ are positive, continuously differentiable, and increasing with respect to link flow, and convex, $c_{ij}^w(0)>0$, ${c_{ij}^{w}}'>0$.
\end{assumption}
\begin{assumption}\label{A:4}
    The link travel time functions $t_{ij}:\mathbb{R}_+\rightarrow \mathbb{R}_+,(i,j)\in \mathcal{E}$ are positive, differentiable, increasing, and strictly convex, $t_{ij}>0, t_{ij}'>0$.
\end{assumption}

The travel time on link $(i,j)$ depends on the link flow $x_{ij} = \sum_{w \in\mathcal{W}} q^w x^w_{ij}$ and the link cost for type $w$ is then $c_{ij}^{w}(t_{ij}(x_{ij}))$. Assumptions \ref{A:2} and \ref{A:4} combine to ensure that the link costs are convex functions of link flow.

\subsection{Primal formulation}

We will formulate a convex minimization problem, whose solution is the Wardrop/Nash equilibrium where all travelers make individually optimal choices according to \eqref{eq:PURC}, taking link costs as given. Link costs, in turn, depend through the common travel time on the link flow, which is the aggregated demand from the individual travelers. This is our primal perturbed utility-based stochastic traffic assignment problem (TAP).

\vspace{1cm}

\begin{subequations}
[TAP]
\label{eq:assignment}
  \begin{align}
    \min_{\bm{x}} Z = \quad & \sum_{(i,j) \in \mathcal{E}} \sum_{w \in \mathcal{W}} \left[ \int_{0}^{\sum_{w' \in \mathcal{W}}q^{w'}x_{ij}^{w'}} c_{ij}^w(t_{ij}(m)) dm + q^w F_{ij}^w(x_{ij}^w)  \right] \quad  & \label{eq:primal_TAP_obj}\\
        s.t. \quad & x_{ij}^w \geq 0, \; \forall (i,j) \in \mathcal{E}, w \in \mathcal{W} &  \label{eq:x_primal_feasibility_1}\\
    & q^w (A_v \bm{x}^w - b_v^w) = 0, \; \forall v \in \mathcal{V}, w \in \mathcal{W}.& (\eta_v^w) \label{eq:x_primal_feasibility_2}
  \end{align}
\end{subequations}

In this expression, $\bm{x}^w = (x_e^w)_{e\in\mathcal{E}}$ is the link flow vector for type $w$, and $\bm{x} = \{x_e^w\}_{e\in\mathcal{E},w\in\mathcal{W}}$. $A_v$ is the row vector corresponding to node $v$ of the incidence matrix $A$.
Eq.~\eqref{eq:x_primal_feasibility_2} is the conservation constraints with corresponding Lagrange multipliers $\eta_v^w$ for each node and type. We denote $\bm{\eta}= (\eta_v^w)_{v\in \mathcal{V}, w\in \mathcal{W}}$.

Our proposed primal TAP formulation extends  \cite{beckmann1956studies}'s deterministic UE formulation with the addition of a perturbation term.
The PURC is similar to the recursive NGEV formulation of \citet{oyama2022markovian} with the important difference that PURC allows corner solutions, i.e. links with zero flow. In particular, because of the requirements on the PURC perturbation function, the optimal flow for a given OD concentrates on a relatively small number of paths, while it is zero in the rest of the network. 

We first show that the equilibrium link flow pattern in the primal TAP equals the PURC demand for each $w$.
\begin{proposition}
    The stochastic user equilibrium condition in TAP~\eqref{eq:assignment} is equivalent to the optimality condition in PURC~\eqref{eq:PURC}, such that traveler route choice behavior is in accordance with the PURC model.
\end{proposition}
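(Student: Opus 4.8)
The plan is to show that the Karush--Kuhn--Tucker (KKT) conditions of the convex program~\eqref{eq:assignment} reproduce, type by type, the complementarity characterization~\eqref{eq:comp_PURC} of the individual PURC problem, with the link costs evaluated at the prevailing equilibrium travel times. First I would record that the program is convex: each perturbation term $q^w F_{ij}^w$ is convex by Assumption~\ref{A:1}, and each congestion integral is convex because its integrand $m \mapsto c_{ij}^w(t_{ij}(m))$ is nonnegative and nondecreasing under Assumptions~\ref{A:2} and~\ref{A:4}, so the integral is a convex function of the aggregate flow and hence of $\bm{x}$ through the linear map $x_{ij}=\sum_{w'}q^{w'}x_{ij}^{w'}$. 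Convexity guarantees that the KKT conditions are both necessary and sufficient for global optimality, so the desired equivalence reduces to matching the KKT system against~\eqref{eq:comp_PURC}.

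Next I would form the Lagrangian, attaching multipliers $\eta_v^w$ to the conservation constraints~\eqref{eq:x_primal_feasibility_2} and $\mu_{ij}^w\ge 0$ to the nonnegativity constraints~\eqref{eq:x_primal_feasibility_1}. Using the incidence structure $\sum_v a_{v,ij}\eta_v^w=\eta_j^w-\eta_i^w$, stationarity in $x_{ij}^w$ becomes $\partial Z/\partial x_{ij}^w + q^w(\eta_j^w-\eta_i^w) - \mu_{ij}^w = 0$. The crux is differentiating the congestion term: by the fundamental theorem of calculus together with the chain rule through the aggregate flow, its derivative with respect to $x_{ij}^w$ is $q^w$ times the marginal link cost $c_{ij}^w(t_{ij}(x_{ij}))$ faced by type $w$ at the aggregate flow, while the perturbation contributes $q^w {F_{ij}^w}'(x_{ij}^w)$. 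This is the step I expect to demand the most care, since it is where the common travel time couples the types and where the endogenous equilibrium cost, rather than an exogenous one, enters the first-order condition.

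Substituting these derivatives, stationarity reads $q^w\big[c_{ij}^w(t_{ij}(x_{ij}))+{F_{ij}^w}'(x_{ij}^w)+\eta_j^w-\eta_i^w\big]=\mu_{ij}^w$, paired with complementary slackness $\mu_{ij}^w x_{ij}^w=0$, $\mu_{ij}^w\ge 0$, and $x_{ij}^w\ge 0$. Dividing by $q^w>0$ and eliminating $\mu_{ij}^w$ gives precisely $0\le x_{ij}^w \perp \big[c_{ij}^w(t_{ij}(x_{ij}))+{F_{ij}^w}'(x_{ij}^w)+\eta_j^w-\eta_i^w\big]\ge 0$ for every link and type, which is~\eqref{eq:comp_PURC} with equilibrium cost $c_{ij}^w(t_{ij}(x_{ij}))$ and type-$w$ node potentials $\eta_v^w$. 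Reading this in both directions---and invoking convexity so that KKT points coincide with global minimizers---shows that $\bm{x}$ solves~\eqref{eq:assignment} if and only if each $\bm{x}^w$ is the PURC demand of type $w$ under the induced travel times, i.e. the flows are in stochastic user equilibrium. I would finish by appealing to the uniqueness of the individual PURC solution established in Section~\ref{sec:purc_solution_property}, so that the route choice behavior is well defined for each type.
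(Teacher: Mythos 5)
Your proposal is correct and takes essentially the same route as the paper: the paper's proof likewise writes down the complementarity (KKT) condition of TAP, \eqref{eq:FOC_primal_TAP}, with equilibrium costs $c_{ij}^{w*}=c_{ij}^w(t(x^*_{ij}))$, cancels the positive factor $q^w$, and observes that what remains is exactly \eqref{eq:comp_PURC} for each type. You merely make explicit what the paper leaves implicit---the convexity argument that renders the KKT conditions necessary and sufficient, the multipliers for the nonnegativity constraints \eqref{eq:x_primal_feasibility_1}, and the differentiation of the congestion integral through the aggregate flow.
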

\begin{proof}
Denote optimal link flows by $x^*_{ij}=\sum_{w \in \mathcal{W}}q^w x_{ij}^{w*}$ and equilibrium type-specific link costs by $c_{ij}^{w*}=c_{ij}^w(t(x^*_{ij}))$
The complementarity condition for $x_{ij}^w$ in the primal TAP is

\begin{align}
\label{eq:FOC_primal_TAP}
    0 \leq x_{ij}^{w*} & \perp q^w(c_{ij}^{w*} + F_{ij}'(x_{ij}^{w*}) + \eta_j^{w*} - \eta_i^{w*}) \geq 0.
\end{align}
Since we consider only types with positive demand $q^w > 0$, this reduces to
\begin{align}
    0 \leq x_{ij}^{w*} & \perp (c_{ij}^{w*} + F_{ij}'(x_{ij}^{w*}) + \eta_j^{w*} - \eta_i^{w*}) \geq 0,
    \label{eq:complementarity}
\end{align}
which is the same as Eq.~\eqref{eq:comp_PURC} required for each traveler.
\end{proof}

We are now ready to prove the existence and uniqueness of the solution to the primal TAP.
\begin{proposition}[Existence and uniqueness of TAP]\label{lemma:TAP_existence_uniqueness}

The primal traffic assignment problem (TAP) admits a unique solution.
\end{proposition}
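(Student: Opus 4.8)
The plan is to treat TAP as a standard convex program and to obtain existence from coercivity together with the Weierstrass theorem, and uniqueness from strict convexity of the objective. First I would record the structural facts about the feasible set defined by \eqref{eq:x_primal_feasibility_1}--\eqref{eq:x_primal_feasibility_2}. Since $q^w>0$, the conservation constraint reduces to $A\bm{x}^w=b^w$ for each $w$, so the feasible set is the intersection of finitely many affine hyperplanes with the nonnegative orthant; it is therefore closed and convex. By Assumption~\ref{A:3} the network is connected, so for every type there is at least one directed origin--destination path, and the associated unit path flow is feasible; hence the feasible set is nonempty.

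Next I would address existence. The key obstacle is that the feasible polyhedron is \emph{unbounded}: given any feasible $\bm{x}$, adding a nonnegative circulation around any cycle preserves $A\bm{x}^w=b^w$ and nonnegativity, so we cannot invoke compactness of the feasible set directly. Instead I would show that the objective $Z$ is coercive on the feasible set. The congestion integrals $\int_0^{x_{ij}}c_{ij}^w(t_{ij}(m))\,dm$ are nonnegative because $c_{ij}^w$ and $t_{ij}$ are positive and the upper limit $x_{ij}=\sum_{w'}q^{w'}x_{ij}^{w'}\ge 0$; thus $Z\ge\sum_{(i,j),w}q^w F_{ij}^w(x_{ij}^w)$. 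Under Assumption~\ref{A:1} each $F_{ij}^w$ is strictly increasing with range $\mathbb{R}_+$, so $F_{ij}^w(s)\to\infty$ as $s\to\infty$, and since $q^w>0$, we get $Z\to\infty$ whenever $\|\bm{x}\|\to\infty$ on the nonnegative orthant. Fixing any feasible $\bm{x}_0$, the sublevel set $\{\bm{x}:Z(\bm{x})\le Z(\bm{x}_0)\}$ intersected with the feasible set is therefore bounded as well as closed, hence compact and nonempty. As $Z$ is continuous, the Weierstrass theorem yields a global minimizer.

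Finally I would prove uniqueness from strict convexity of $Z$. The objective splits into a congestion part and a perturbation part. The congestion part is convex: its integrand, viewed as a function of the total link flow, is increasing because $c_{ij}^w$ is increasing (Assumption~\ref{A:2}) and $t_{ij}$ is increasing (Assumption~\ref{A:4}). However, this part depends only on the total link flows $x_{ij}=\sum_w q^w x_{ij}^w$ and is not strictly convex in $\bm{x}$. Strictness comes from the perturbation part $\sum_{(i,j),w}q^w F_{ij}^w(x_{ij}^w)$: since each $F_{ij}^w$ is strictly convex (Assumption~\ref{A:1}) and $q^w>0$, every coordinate $x_{ij}^w$ enters a strictly convex term, so the perturbation part is strictly convex in all of $\bm{x}$. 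A convex function plus a strictly convex function is strictly convex, so $Z$ is strictly convex on the convex feasible set, and the minimizer is unique. The only delicate point in the whole argument is the coercivity step, and it hinges precisely on the range condition in Assumption~\ref{A:1} that $F_{ij}^w$ maps $\mathbb{R}_+$ onto $\mathbb{R}_+$; without it the unbounded feasible region would leave existence in doubt.
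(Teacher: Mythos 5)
Your proof is correct, but your existence argument takes a genuinely different route from the paper's. The paper never establishes coercivity; instead it argues that at an optimum no cycle can carry positive flow (removing a circulation strictly lowers the objective, since all cost terms are increasing and the $F^w_{ij}$ vanish at zero), so the domain can be restricted without loss of generality to acyclic per-type flows, which form a bounded set on which a minimizer exists. You instead work on the full unbounded polyhedron and get existence from coercivity of $Z$ plus Weierstrass on a sublevel set, correctly noting that the congestion integrals are nonnegative so $Z \geq \sum_{(i,j),w} q^w F^w_{ij}(x^w_{ij}) \to \infty$. Both arguments are valid; yours is the more standard convex-analytic one and, as a bonus, your uniqueness step is more careful than the paper's one-line assertion, since you pinpoint that the congestion part (a function of aggregate link flows only) is merely convex and that strictness is supplied entirely by the separable perturbation terms. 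What the paper's route buys, and yours does not, is a structural byproduct that gets reused: acyclicity implies each per-type flow decomposes into unit path flows, so $x^{w*}_{ij} \leq 1$, which is exactly the projection invoked in Step 1.1 of Algorithm~\ref{alg:dual_algorithm}, Eq.~\eqref{eq:PURC_assignment_projection}. One small correction to your closing remark: coercivity does not hinge precisely on the range condition in Assumption~\ref{A:1}. Strict convexity with $F'_{ij}(0)=0$ already forces $F'_{ij}(s) \geq F'_{ij}(s_0) > 0$ for $s \geq s_0 > 0$, hence $F_{ij}(s) \to \infty$ linearly or faster; the range condition is sufficient but not the essential driver (its real role in the paper is to make $(F'_e)^{-1}$ well defined on all of $\mathbb{R}_+$ for the closed-form loading \eqref{eq:closed_form_flow}).
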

\begin{proof}
By Assumptions \ref{A:1}, \ref{A:2}, and \ref{A:4} on $F^w_{ij}$, $c^w_{ij}$, and $t_{ij}, ij\in\mathcal{E}, w\in\mathcal{W}$, the objective $Z$ is strictly convex. In addition, the domain is convex. Hence, any solution is unique. To establish existence, we will show that the relevant domain is bounded. Observe that the perceived link cost $c_{ij}^w(x_{ij}) + F_{ij}'(x_{ij}^w)$ is positive for all links and types. Links with positive flow can never form a cycle at optimum, since then a cheaper feasible flow vector can be identified by removing the cyclic flow. It is therefore no loss of generality to restrict the domain of $Z$ to the set of acyclic link flows for each $w$. This set is bounded, hence TAP~\eqref{eq:assignment} admits a unique solution.
\end{proof}

\subsection{Dual formulation}
The primal TAP involves a large number of flow conservation constraints, one for each combination of type and network node. In this section, we will utilize that we have found an expression for the individual link flows as a function of the dual parameters. We will use this to find a closed-form expression for the dual problem corresponding to the primal TAP, and we will show that the dual problem is unconstrained. This will be useful for finding a fast solution algorithm.

The Lagrangian function for the primal TAP  \eqref{eq:assignment} is

\begin{subequations}

\label{eq:assignment_lagrangian}
    \begin{align}
    L(\bm{x, \eta}) = &  \sum_{(i,j) \in \mathcal{E}} \sum_{w \in \mathcal{W}} \left[\int_{0}^{\sum_{w' \in \mathcal{W}}q^{w'} x_{ij}^{w'}} c_{ij}^w(t_{ij}(m)) dm + q^w F_{ij}^w(x_{ij}^w)  \right] \nonumber \\
    & - \sum_{(i,j) \in \mathcal{E}}\sum_{w \in \mathcal{W}} q^w\left(\eta_i^w - \eta_j^w \right)x_{ij}^w - \sum_{v \in \mathcal{V}}\sum_{w \in \mathcal{W}} q^w \eta_v^w  b_v^w \label{eq:dual_TAP_obj}\\
    \text{s.t.} \quad& x_{ij}^w \geq 0,\; \forall (i,j) \in \mathcal{E}, w \in \mathcal{W},
\end{align}
\end{subequations}
where $\eta^w_{i},w\in \mathcal{W}, i\in \mathcal{V}$ are the Lagrangian multipliers for the flow conservation constraints. As for the individual traveler, we are free to impose the normalization that $\eta^w_{d^w}=0$, such that these Lagrangian multipliers can be interpreted as the minimum perceived cost from node $i$ to the destination node $d^w$ of each type $w$.

The Lagrangian function \eqref{eq:assignment_lagrangian} has simple constraints, but adds extra decision variables, the dual variables $\eta^w_{ij}$. Using the closed-form expression \eqref{eq:xij} for the flow variables $x^{w*}_{ij}$ as a function of the dual variables allows us to reduce the number of decision variables considerably.

\begin{proposition}[Perturbed utility-based network loading]
The optimal link flow $x_{ij}^{w*}$ for given $\eta$ is
\begin{align}
 x_{ij}^{w*} = x_{ij}^{w*}(\eta_i^{w*}, \eta_j^{w*},c^{w*}_{ij}) = (F_{ij}^{w'})^{-1}\left(\eta_i^{w*} - \eta_j^{w*} - c^{w*}_{ij} \right), \label{eq:closed_form_flow}
\end{align}
where
\begin{align}
c^{w*}_{ij} = c^w_{ij}\left(t_{ij}\left(\sum_{w \in \mathcal{W}}q^w x^{w*}_{ij}\right)\right). \label{eq:fixed_point_link_cost}
\end{align}
\end{proposition}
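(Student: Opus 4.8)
The plan is to obtain \eqref{eq:closed_form_flow} by minimizing the Lagrangian $L(\bm{x},\bm{\eta})$ over $\bm{x}\ge 0$ for fixed $\bm{\eta}$ and reading off the stationarity conditions, mirroring the single-traveler derivation of Eq.~\eqref{eq:xij}. First I would observe that, for fixed $\bm{\eta}$, the Lagrangian in \eqref{eq:assignment_lagrangian} decouples across links: the only coupling among the flow variables $\{x_{ij}^w\}_w$ is through the aggregate $X_{ij}=\sum_{w'}q^{w'}x_{ij}^{w'}$ entering the cost integral, and this involves only flows on the same link $(i,j)$. Because each perturbation term $q^w F_{ij}^w$ is strictly convex while the cost integral is convex in $X_{ij}$ (as $c_{ij}^w\circ t_{ij}$ is increasing by Assumptions~\ref{A:2} and~\ref{A:4}), the per-link objective is strictly convex in $\{x_{ij}^w\}_w$. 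Hence the inner minimizer exists and is unique, and its KKT conditions are both necessary and sufficient.

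Second, I would write the complementarity (stationarity) condition for each $x_{ij}^w$. This is precisely Eq.~\eqref{eq:complementarity}, already obtained in the proof of the first proposition, namely $0\le x_{ij}^{w*}\perp \big(c_{ij}^{w*}+F_{ij}^{w\prime}(x_{ij}^{w*})+\eta_j^{w*}-\eta_i^{w*}\big)\ge 0$, with $c_{ij}^{w*}=c_{ij}^w\!\big(t_{ij}(X_{ij}^*)\big)$ the equilibrium link cost. The key observation is that this condition has exactly the same algebraic form as the single-traveler complementarity condition \eqref{eq:comp_PURC}, with the fixed cost $c_{ij}$ replaced by the endogenous cost $c_{ij}^{w*}$.

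Third, I would invert $F_{ij}^{w\prime}$ in the two complementarity cases, reusing the argument of Section~\ref{sec:purc_solution_property}. On links with $x_{ij}^{w*}>0$ the bracket vanishes, so $F_{ij}^{w\prime}(x_{ij}^{w*})=\eta_i^{w*}-\eta_j^{w*}-c_{ij}^{w*}$ and inverting gives \eqref{eq:closed_form_flow}. On links with $x_{ij}^{w*}=0$ the bracket is nonnegative, which by $F_{ij}^{w\prime}(0)=0$ means $\eta_i^{w*}-\eta_j^{w*}-c_{ij}^{w*}\le 0$; the convention $(F_{ij}^{w\prime})^{-1}(y)=0$ for $y<0$ in Assumption~\ref{A:1} then makes \eqref{eq:closed_form_flow} hold with value zero as well. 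Substituting $X_{ij}^*=\sum_w q^w x_{ij}^{w*}$ back into $c_{ij}^{w*}$ yields the fixed-point relation \eqref{eq:fixed_point_link_cost}.

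I expect the only real subtlety to be conceptual rather than computational: the formula is not genuinely explicit, since $c_{ij}^{w*}$ depends on the aggregate $X_{ij}^*$, which in turn depends on all type flows through \eqref{eq:closed_form_flow}. The substantive point to make clear is that this coupling is purely link-local, so for each link \eqref{eq:closed_form_flow}–\eqref{eq:fixed_point_link_cost} constitute a small fixed-point system in the $|\mathcal{W}|$ unknowns $\{x_{ij}^w\}_w$, whose solvability and uniqueness follow from the strict convexity and monotonicity already invoked. Everything else is a direct transcription of the single-traveler argument already in hand.
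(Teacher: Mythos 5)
Your proposal is correct and takes essentially the same route as the paper's proof: both reduce the claim to the complementarity condition \eqref{eq:complementarity} and reuse the single-traveler inversion argument of Section~\ref{sec:purc_solution_property}, with the convention $(F_{ij}^{w})'^{-1}(y)=0$ for $y<0$ (i.e., projection onto the positive orthant) handling the zero-flow case, and with $c^{w*}_{ij}$ entering as the endogenous fixed-point cost \eqref{eq:fixed_point_link_cost}. Your additional remarks on per-link strict convexity (making the KKT conditions necessary and sufficient) and on the link-local fixed-point structure are sound elaborations of points the paper leaves implicit, not a different method.
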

\begin{proof}
As shown in Section~\ref{sec:purc_solution_property}, Eq.~\eqref{eq:xij} holds for $x_{ij}^{w*} \geq 0$. This is ensured by projecting the RHS of
Eq.~\eqref{eq:closed_form_flow} onto the positive orthant. Consequently, the complementarity conditions~\eqref{eq:complementarity} are satisfied for given $\eta^*$ and the corresponding $x_{ij}^{w*}$. Hence, the optimal link flow is obtained by Eq.~\eqref{eq:closed_form_flow}.
\end{proof}

Having determined $x^{w*}_{ij}$ as a function of $\eta_i^{w*}, \eta_j^{w*}$ and $c_{ij}^{w*}$ also in the primal TAP, we can substitute that into the  Lagrangian~\eqref{eq:assignment_lagrangian} to obtain the corresponding Lagrangian dual and the dual traffic assignment problem (DTAP).

[DTAP]
\begin{align}
\label{eq:assignment_lagrangian_dual}
    \max_{\bm{\eta}} G = & \sum_{(i,j) \in \mathcal{E}} \sum_{w\in\mathcal{W}}\left[ \int_{0}^{\sum_{w' \in \mathcal{W}}q^{w'} x_{ij}^{w'*}} c^{w}_{ij}(t_{ij}(m)) dm + q^w  F_{ij}^w(x_{ij}^{w*})  \right] \nonumber \\
    & - \sum_{(i,j) \in \mathcal{E}}\sum_{w \in \mathcal{W}} q^w\left(\eta_i^w - \eta_j^w \right)x_{ij}^{w*} - \sum_{v \in \mathcal{V}}\sum_{w \in \mathcal{W}} q^w \eta_v^w  b_v^w
\end{align}
We note that DTAP is unconstrained with the node potentials $\bm{\eta}$ being the only decision variables. This allows us to adapt existing fast algorithms for solving the DTAP.  The closed-form network loading expression~\eqref{eq:closed_form_flow}, allows us to directly obtain the corresponding individual flows.
The following lemma shows that the strong duality condition holds, such that solving the dual problem is equivalent to solving the primal TAP~\eqref{eq:assignment}.

\begin{lemma}[Strong duality]\label{lemma:strong_duality}
The duality gap between the primal problem TAP \eqref{eq:assignment} and the corresponding dual problem at their optimal solutions is zero.
\end{lemma}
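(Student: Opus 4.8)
The plan is to recognize TAP~\eqref{eq:assignment} as a convex program whose equality constraints (flow conservation) and inequality constraints (nonnegativity) are \emph{all affine}, and then to invoke the standard strong-duality result for such problems. First I would record that the objective $Z$ is strictly convex, as established in the proof of Proposition~\ref{lemma:TAP_existence_uniqueness}, and that the feasible set is convex, so TAP is a genuine convex minimization problem. The key structural observation is that the inequality constraints $x_{ij}^w \geq 0$ in~\eqref{eq:x_primal_feasibility_1} and the equality constraints $q^w(A_v \bm{x}^w - b_v^w)=0$ in~\eqref{eq:x_primal_feasibility_2} are affine in $\bm{x}$; there are no nonlinear inequality constraints.

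Next I would verify primal feasibility together with finiteness of the optimal value, which is all that the affine-constraint case requires. By Assumption~\ref{A:3} the network is connected, so for each type $w$ there is at least one origin--destination path, and loading one unit of each type's demand on such a path produces a feasible $\bm{x}$; hence TAP is feasible. As argued in Proposition~\ref{lemma:TAP_existence_uniqueness}, the domain may be restricted without loss to bounded acyclic flows, so continuity of $Z$ guarantees that the minimum is attained and finite.

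I would then appeal to the refined Slater condition: for a convex problem in which all inequality constraints are affine, strong duality holds as soon as the problem is feasible, with no strict feasibility required. Since both constraint families here are affine and TAP is feasible with finite optimum, strong duality applies, and the optimal primal value equals the optimal dual value $\sup_{\bm{\eta}} g(\bm{\eta})$, where $g(\bm{\eta}) = \inf_{\bm{x}\geq 0} L(\bm{x},\bm{\eta})$ is the Lagrangian dual function. By the network-loading proposition this inner infimum is attained at the closed-form flow~\eqref{eq:closed_form_flow}, so $g(\bm{\eta})$ coincides with the objective $G$ of DTAP~\eqref{eq:assignment_lagrangian_dual}; consequently the duality gap between TAP and DTAP is zero. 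Equivalently, one could argue directly through the KKT conditions: because the problem is convex with affine constraints and a feasible point exists, the KKT conditions are necessary and sufficient for optimality, so the complementarity system~\eqref{eq:complementarity} together with the existence of multipliers $\bm{\eta}^*$ yields $Z(\bm{x}^*)=L(\bm{x}^*,\bm{\eta}^*)=G(\bm{\eta}^*)$ at once.

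The main obstacle is not the strong-duality invocation itself---which is immediate once affineness of the constraints is noted---but rather confirming that the Lagrangian dual function genuinely equals the closed-form $G$ of~\eqref{eq:assignment_lagrangian_dual}. In particular, one must check that the unconstrained inner minimization over $\bm{x}\geq 0$ is solved by the projected inverse-derivative expression~\eqref{eq:closed_form_flow}, including at the boundary $x_{ij}^{w*}=0$, where complementary slackness must be verified using the conditions on $F_{ij}^w$ in Assumption~\ref{A:1}. Since this has already been handled in the network-loading proposition, the remaining work is simply to assemble these pieces and state that zero duality gap follows.
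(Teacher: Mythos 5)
Your proposal is correct and follows essentially the same route as the paper: both observe that TAP is convex with purely affine (linear) constraints and invoke the weak/refined Slater condition, which under affine constraints reduces to mere primal feasibility---witnessed in the paper by an all-or-nothing shortest-path assignment and in your argument by loading each type's unit demand on a path guaranteed by Assumption~\ref{A:3}. Your closing check that the inner minimization over $\bm{x}\geq 0$ actually produces the closed-form dual objective $G$ of DTAP~\eqref{eq:assignment_lagrangian_dual}, including the boundary case $x_{ij}^{w*}=0$, is a sound elaboration of a step the paper leaves implicit rather than a genuinely different approach.
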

\begin{proof}
The primal TAP~\eqref{eq:assignment} is convex, the constraints~\eqref{eq:x_primal_feasibility_1}-\eqref{eq:x_primal_feasibility_2} are linear, and optimal solution $x^*$ exists. Then we need only to verify the weak Slater's condition \citep[][Section 5.2.3]{boyd2004convex}, namely that
\begin{align}
\exists x^w \in \mathbb{R}^{|\mathcal{E}|}: \quad - x^w \leq 0, \; Ax^w - b^w = 0, \forall w \in \mathcal{W}. \nonumber
\end{align}
However, this condition is satisfied by the flow determined by all-or-nothing (shortest path) assignment. We conclude that the duality gap is zero and thus that strong duality holds.
\end{proof}

Finally, we need to verify that the DTAP admits a solution, such that we can always use the unconstrained DTAP to solve the perturbed utility-based stochastic traffic assignment problem.
\begin{lemma}[Existence of DTAP] \label{lemma:DTAP_existence}
The DTAP \eqref{eq:assignment_lagrangian_dual} admits at least one solution $\eta^*$.
\end{lemma}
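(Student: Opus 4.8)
The plan is to establish existence of a dual maximizer by invoking strong duality together with the fact that the primal TAP already has a (unique) optimal solution, as proved in \cref{lemma:TAP_existence_uniqueness} and \cref{lemma:strong_duality}. In convex optimization, the relevant existence question for the dual is whether the supremum in DTAP is attained. Since \cref{lemma:strong_duality} shows that Slater's condition holds (the all-or-nothing assignment provides a strictly feasible point), standard convex duality theory guarantees not only zero duality gap but also that the set of optimal dual variables (KKT multipliers) is nonempty. Concretely, I would argue that because the primal optimum $x^*$ exists and Slater's condition is satisfied, there must exist Lagrange multipliers $\eta^*$ satisfying the KKT conditions, and any such $\eta^*$ is by construction a maximizer of the dual objective $G$.

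The key steps, in order, are as follows. First, I would recall that $x^*$ exists and is unique (\cref{lemma:TAP_existence_uniqueness}) and that the problem is convex with linear constraints and a Slater point (\cref{lemma:strong_duality}). Second, I would cite the standard result \citep[][Section 5.5.3]{boyd2004convex} that under Slater's condition for a convex program with attained primal optimum, the KKT conditions are necessary and sufficient for optimality, so a multiplier vector $\eta^*$ exists that together with $x^*$ satisfies complementarity \eqref{eq:complementarity} and the conservation constraints \eqref{eq:x_primal_feasibility_2}. Third, I would observe that since strong duality holds and the duality gap is zero, this $\eta^*$ achieves $G(\eta^*) = Z(x^*)$, i.e.\ it attains the dual optimum; hence DTAP admits at least one solution.

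An alternative, more self-contained route is to normalize $\eta^w_{d^w}=0$ for each type and identify each $\eta^{w*}_i$ with the minimum perceived cost from node $i$ to the destination $d^w$, as given by the interpretation in \eqref{eq:etamin}. Under this normalization the multipliers are uniquely pinned down, finite (because link costs and marginal perturbations are positive and the network is connected by \cref{A:3}), and one checks directly that the induced flows \eqref{eq:closed_form_flow} satisfy the KKT system, so $\eta^*$ solves DTAP. This construction has the advantage of exhibiting an explicit solution rather than appealing to an abstract existence theorem.

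The main obstacle is subtle: because the dual variables enter only through differences $\eta^w_i-\eta^w_j$, the unnormalized dual objective $G$ is invariant along the constant-shift directions and therefore has no maximizer unless one fixes the normalization. The careful part of the argument is thus to make clear that existence is claimed on the normalized domain (or equivalently modulo the constant-shift subspace), and to verify that on this reduced space the relevant level sets are bounded so that the supremum is attained. I expect the cleanest writeup to lean on the KKT/strong-duality argument, using the already-established existence of $x^*$ to produce $\eta^*$ directly, rather than arguing coercivity of $G$ from scratch.
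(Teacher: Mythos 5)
Your main argument is correct, but it takes a genuinely different route from the paper's. The paper's proof stays concrete: it fixes the unique optimal flows $x^{w*}$ from \cref{lemma:TAP_existence_uniqueness}, observes that the perceived link costs $c^{w*}_{ij}+(F^{w}_{ij})'(x^{w*}_{ij})$ are positive and bounded, uses the right-hand side of the complementarity condition \eqref{eq:complementarity} (equivalently the shortest-path recursion \eqref{eq:etamin}) to conclude that each $\eta^{w*}_k$ is bounded up to the normalization constant $\eta^w_{d^w}$, and then notes that for the given $x^{w*}$ the DTAP objective is linear in $\bm{\eta}$, so a maximizer exists over the bounded normalized set. Your primary route instead invokes the abstract machinery: Slater's condition (already verified in \cref{lemma:strong_duality}) together with attainment and finiteness of the primal optimum implies, by standard convex duality theory, that the dual optimum is attained by some KKT multiplier vector $\eta^*$ with $G(\eta^*)=Z(x^*)$. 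That is shorter and arguably cleaner, and it maximally reuses the lemmas already established; what the paper's construction buys in exchange is explicit boundedness of the multipliers and the ``solution for any given constant $\eta^w_{d^w}$'' statement, which is operationally relevant since the $\eta$'s are the decision variables of the qN-AGD* algorithm. Your alternative, self-contained construction (normalize $\eta^w_{d^w}=0$, identify $\eta^{w*}_i$ with the minimum perceived cost via \eqref{eq:etamin}, verify the KKT system together with \eqref{eq:closed_form_flow}) is essentially the paper's own argument in slightly different clothing.

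One small correction to your ``obstacle'' paragraph: shift-invariance of $G$ along the constant directions does not imply that the unnormalized problem ``has no maximizer.'' Since $\sum_{v}b^w_v=0$, the objective $G$ is constant along those directions, so a maximizer exists on the full space if and only if one exists on the normalized slice; what fails without normalization is uniqueness and boundedness of the solution set (and hence any coercivity-based Weierstrass argument), not existence. Your eventual resolution --- proving attainment on the reduced space --- is fine, but the claim as stated is too strong.
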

\begin{proof}
By Proposition~\ref{lemma:TAP_existence_uniqueness}, the optimal flow $x_{ij}^{w*}$ exists uniquely and link costs $c_{kj}^w(t_{ij}(x_{ij}^*)) + (F_{ij}^{w})'(x_{kj}^{w*})$ are positive and bounded. Hence, by the RHS of the complementarity~\eqref{eq:complementarity}, $\eta^{w*}_{k}$ is bounded up to a constant $\eta^w_{d^w}$. In addition, for the given $x_{ij}^{w*}$, DTAP~\eqref{eq:assignment_lagrangian_dual} reduces to a linear program. We conclude that DTAP admits a solution $\eta^*$ for any given constant $\eta^w_{d^w}$.
\end{proof}

\section{Solution method}\label{sec:solution_algo}
In this section, we propose a fast algorithm for solving the dual assignment problem~\eqref{eq:assignment_lagrangian_dual}. In contrast to the existing primal algorithms that solve for optimal link flows as a constrained optimization problem, the proposed dual approach solves for the Lagrangian multipliers (interpreted as the minimum perceived costs) in an unconstrained optimization problem. In addition to avoiding constraints, the dual problem has the advantage that the number of decision variables is smaller, since the number of nodes is typically less than half the number of links in a road network $({|\mathcal{V}|}/{|\mathcal{E}|} \leq 0.5)$.

Accelerated gradient descent (AGD)~\citep{beck2009fast} is a first-order method with an additional Nesterov momentum~\citep{nesterov1983method} step for extrapolation, which has demonstrated its usefulness for solving origin-based Markovian traffic equilibrium problems~\citep{oyama2022markovian}. \citet{oyama2022markovian} require a backtracking procedure to successively reduce the step size in order to obtain algorithm convergence. This means each iteration requires several runs of network loading to select the step size, which is computationally costly for large networks. Moreover, the convergence trajectory in the original AGD often oscillates at later iterations, which reduces the speed of the algorithm. We propose a quasi-Newton accelerated gradient descent algorithm (qN-AGD*), which uses the AGD* scheme~\citep{chambolle2015convergence} to reduce oscillation, and uses the Hessian diagonal to automatically scale the gradient in a quasi-Newton manner with fixed step size, without the need for a backtracking procedure.

Recall that the optimal link flows \eqref{eq:closed_form_flow} depend on the type-specific link costs $c^{w*}_{ij}$. However, these in turn depend on the link flows through the link travel time functions $t_{ij}$. To tackle this problem, existing primal algorithms adapt the Gauss-Seidel approach, which decomposes the assignment problem for each origin-based network flow and iteratively solves these subproblems~\citep[e.g.,][]{dial2006path,nie2010class}. In contrast, we consider all the node potential variables as one block of variables, while the link travel times are considered as another separate block of variables. Thus the assignment problem is decomposed into only two subproblems. This exploits that all node potentials $\eta$ can be updated in parallel, while the link travel times $t$ can be updated subsequently.

We cast the link travel time problem as an auxiliary fixed-point problem for given $\eta$.
\begin{align*}
t_{ij} &= t_{ij}\left(\sum_{w \in \mathcal{W}}q^w \cdot (F_{ij}^{w'})^{-1}\left(\eta_i^{w*} - \eta_j^{w*} - c^{w*}_{ij} \right)\right)
\end{align*}
with corresponding residual function
\begin{align*}
U_{ij}(x_{ij}^{*}, t^*_{ij}) &= t_{ij}\left(\sum_{w \in \mathcal{W}}q^w x_{ij}^{w*} \right) - t^*_{ij} = 0
\end{align*}

We find that the fixed-point $t^*$ exists.
\begin{proposition}
Under Assumption~\ref{A:4}, the fixed point $t^*_{ij},(i,j)\in \mathcal{E}$ exists.
\end{proposition}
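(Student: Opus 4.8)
The plan is to exploit that, once the node potentials $\bm\eta$ are held fixed, the fixed-point map decouples completely across links. For each link $(i,j)$, substituting the closed-form loading~\eqref{eq:closed_form_flow} with $c_{ij}^{w*}=c_{ij}^w(t)$ into the travel-time function yields the scalar map
\begin{align*}
\Phi_{ij}(t) = t_{ij}\!\left(\sum_{w\in\mathcal{W}} q^w\, (F_{ij}^{w'})^{-1}\!\left(\eta_i^{w*} - \eta_j^{w*} - c_{ij}^w(t)\right)\right).
\end{align*}
Because $\bm\eta$ is fixed, $\Phi_{ij}$ depends only on its own argument $t$, so a fixed point of the full system is exactly a collection of per-link fixed points $t_{ij}^*=\Phi_{ij}(t_{ij}^*)$. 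The task thus reduces to showing that each scalar map $\Phi_{ij}$ has a fixed point. First I would record that $\Phi_{ij}$ is continuous: $c_{ij}^w$ is continuous by Assumption~\ref{A:2}, the inverse derivative $(F_{ij}^{w'})^{-1}$ is continuous on all of $\mathbb{R}$ (including at the kink at the origin, by the convention in Assumption~\ref{A:1}), and $t_{ij}$ is continuous by Assumption~\ref{A:4}.

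Next I would identify a compact interval that $\Phi_{ij}$ maps into itself. Since every admissible travel time is nonnegative and $c_{ij}^w$ is increasing with $c_{ij}^w(0)>0$, the cost is bounded below, $c_{ij}^w(t)\geq c_{ij}^w(0)>0$, so the argument of $(F_{ij}^{w'})^{-1}$ is bounded above and each loaded flow satisfies $0\leq x_{ij}^{w*}\leq \bar x_{ij}^w := (F_{ij}^{w'})^{-1}(\eta_i^{w*}-\eta_j^{w*}-c_{ij}^w(0))<\infty$. Writing $\bar x_{ij}=\sum_{w} q^w\bar x_{ij}^w$ and $\bar t_{ij}=t_{ij}(\bar x_{ij})$, monotonicity of $t_{ij}$ gives $t_{ij}(0)\leq \Phi_{ij}(t)\leq \bar t_{ij}$ for every $t$, since the aggregate loaded flow lies in $[0,\bar x_{ij}]$. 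Hence $\Phi_{ij}$ is a continuous self-map of the compact interval $[t_{ij}(0),\bar t_{ij}]$.

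Finally I would invoke Brouwer's fixed-point theorem — or, more elementarily, the intermediate value theorem applied to $g(t)=\Phi_{ij}(t)-t$, which satisfies $g(t_{ij}(0))\geq 0\geq g(\bar t_{ij})$ — to conclude that each $\Phi_{ij}$ admits a fixed point, establishing existence of $t^*$. As a bonus I would note that $\Phi_{ij}$ is nonincreasing in $t$: a higher travel time raises the cost, which lowers the loaded flow through the increasing map $(F_{ij}^{w'})^{-1}$, which in turn lowers $t_{ij}(\cdot)$; consequently $g$ is strictly decreasing and the fixed point is in fact unique, although the proposition asks only for existence.

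The only genuinely delicate step is the self-map/boundedness argument: one must use the strictly positive lower bound $c_{ij}^w(0)>0$ to guarantee that the loaded flows cannot grow without bound as $t$ varies, and one must take care at the corner where the inverse derivative saturates at zero. Everything else is routine continuity bookkeeping, and the decoupling across links is what makes the whole argument short.
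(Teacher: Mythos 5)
Your proof is correct, but it takes a genuinely different route from the paper's. The paper proves existence in two lines by falling back on Proposition~\ref{lemma:TAP_existence_uniqueness}: the primal TAP admits a unique solution $x^{w*}_{ij}$, so one simply evaluates $t^*_{ij}=t_{ij}\bigl(\sum_{w\in\mathcal{W}}q^w x^{w*}_{ij}\bigr)$ and notes that this makes the residual $U_{ij}$ vanish --- the fixed point is exhibited directly at the equilibrium flows, with no topological machinery and no per-link analysis. You instead treat the auxiliary problem as it is actually posed (``for given $\eta$''), observe that it decouples into scalar maps $\Phi_{ij}$, and obtain existence from a continuous self-map of the compact interval $[t_{ij}(0),\bar t_{ij}]$, equivalently the intermediate value theorem applied to $\Phi_{ij}(t)-t$; the key quantitative input, which you correctly identify, is $c^w_{ij}(0)>0$ from Assumption~\ref{A:2}, which caps the loaded flow at $\bar x_{ij}$ (finite because Assumption~\ref{A:1} makes $(F_{ij}^{w'})^{-1}$ a finite-valued, continuous, nondecreasing map from $\mathbb{R}$ into $\mathbb{R}_+$ --- a point worth stating explicitly, since your IVT endpoints depend on it). Your version is strictly more informative in a way that matters for the algorithm: the paper's argument certifies the fixed point only at the equilibrium multipliers $\bm{\eta}^*$, since it defines the flows via the TAP optimum, whereas Step 1.3 of Algorithm~\ref{alg:dual_algorithm} performs its Newton update at non-equilibrium iterates $\eta^{(m)}$, and it is exactly your ``any fixed $\eta$'' statement, together with the uniqueness bonus from the monotone decrease of $\Phi_{ij}$, that shows the inner fixed-point problem is well posed along the whole trajectory. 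What the paper's route buys is brevity and reuse of its global convexity result; what yours buys is validity at every iterate, uniqueness of the per-link fixed point, and independence from the TAP existence theorem.
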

\begin{proof}
By Proposition~\ref{lemma:TAP_existence_uniqueness}, type-specific link flows $x_{ij}^{w*}$ exist uniquely. Hence, link flows $x_{ij}^{*} = \sum_{w \in \mathcal{W}}x_{ij}^{w*}$ are also unique. The strict monotonicity of the  link travel time functions $t_{ij}(x_{ij})$ then implies that the fixed point $t_{ij}^* = t_{ij}(x_{ij}^*)$ exists.
\end{proof}

We now propose the following dual assignment algorithm, combining the AGD* algorithm with a quasi-Newton gradient scaling, as well as a Newton step for updating link travel times.
\begin{algorithm}[H]
\caption{Dual assignment algorithm -- qN-AGD*}\label{alg:dual_algorithm}
\begin{algorithmic}
\State \textbf{Step 0: Initialization}.\; Input initial points $\eta^{(0)}$ and $c^{*(0)}$, and step sizes $\gamma_1, \gamma_2$, set iteration counter $m=0$, momentum acceleration variable $r_0=1$ and momentum acceleration parameter $\alpha > 1$ .\\

\State \textbf{Step 1: Iteration}.\;
\State  \quad\textit{\textbf{Step 1.1.}}\;- \textit{PURC assignment}:\; \begin{align}\label{eq:PURC_assignment_projection}
x_{ij}^{w*(m+1)} = \min \left\{1, (F_{ij}^{w'})^{-1}\left(\eta_i^{w(m)} - \eta_j^{w(m)} - c^{w*(m)}_{ij} \right)\right\}.
\end{align}
\State  \quad\textit{\textbf{Step 1.2.}}\;- \textit{Update Lagrangian multipliers $\eta$, with Nesterov's momentum acceleration}:\;
\begin{subequations}\label{eq:node_potential_update}
\begin{align}
\tilde{\eta}_j^{w(m+1)} &= \eta_j^{w(m)} + \gamma_1 \tilde{\nabla}_{\eta_j^{w}} G(\eta^{(m)}),\label{eq:aux_node_potential_update}\\
{\eta}_j^{w(m+1)} &= \tilde{\eta}_j^{w(m+1)} + \frac{m}{m + \alpha} \left(\tilde{\eta}_j^{w(m+1)} - \tilde{\eta}_j^{w(m)} \right).
\end{align}
\end{subequations}

\State  \quad\textit{\textbf{Step 1.3.}}\;- \textit{Update auxiliary link travel times $t^{*}$ and $c^{w*}$, by one Newton step}:\; \begin{align}\label{eq:travel_time_newton}
t^{*(m+1)}_{ij} = t^{*(m)}_{ij} - \gamma_2 \frac{U_{ij}\left(x_{ij}^{w*(m+1)}, t^{*(m)}_{ij}\right)}{\nabla_{t^{*}_{ij}} U_{ij}\left(x_{ij}^{w*(m+1)}, t^{*(m)}_{ij}\right)}.
\end{align} \\
\State \quad \quad \quad \quad \quad \quad In addition, update link cost:\; \begin{align}
    c^{w*(m+1)} = c^{w}\left(t^{*(m+1)}_{ij} \right).
\end{align}

\State \textbf{Step 2: Convergence test}.\; If the stopping criteria hold, \textbf{stop}. Otherwise, set $m = m +1$ and go to \textbf{Step 1}.\\

\end{algorithmic}
\end{algorithm}

Both AGD and AGD* employ the Nesterov momentum acceleration method, while mainly differing in the momentum updating scheme for $r_{m+1}$. Compared to the AGD, AGD* Eq.~\eqref{eq:node_potential_update} uses a smaller extrapolation step, which is often slower at early iterations but has the advantage of avoiding potential oscillation when approaching convergence~\citep{chambolle2015convergence}. We will perform numerical experiments to illustrate the superior performance of the AGD* to AGD in the following section.

Next, we will show how to compute the gradient of the Lagrangian dual $G(\bm{\eta})$. By the envelope theorem \citep{milgrom2002envelope}, for the given link costs $c^*$ and $x^{w*}_{ij}$ (by Eq.~\ref{eq:closed_form_flow}), the gradient of $G(\bm{\eta})$ w.r.t. $\eta_k^w$ is

\begin{align}\label{eq:simplified_gradient}
{\nabla}_{\eta_k^{w}} G({\eta}^{(m)}) &= q^w\left[\sum_{i:(i,k)\in\mathcal{E}}  x^{w*(m+1)}_{ik} - \sum_{j:(k,j)\in\mathcal{E}}x^{w*(m+1)}_{kj} - b_k^w\right] \nonumber\\
&= q^w (A_k x^{w*(m+1)} - b_k^w).
\end{align}

We further propose to scale the gradient with an upper bound of the Hessian diagonal to speed up convergence \citep{nie2012note}. This is often considered as a quasi-Newton method in primal deterministic assignment algorithms~\citep[e.g.,][]{bar2002origin,nie2010class}. The corresponding scaled gradient is
\begin{align}\label{eq:scaled_gradient}
\tilde{\nabla}_{\eta_k^{w}} G({\eta}^{(m)}) &= \frac{{\nabla}_{\eta_k^{w}} G({\eta}^{(m)})}{q^w A_k  \left(\nabla_{\eta_k^{w(m)}}((F_{ij}^{w})')^{-1}\right)^{|\mathcal{E}|\times 1}},
\end{align}
where the division is performed element-wise. Note that, in the qN-AGD* algorithm, the scaled gradient~\eqref{eq:scaled_gradient} is used to update auxiliary node potentials $\tilde{\eta}$ in Eq.~\eqref{eq:aux_node_potential_update}, which makes it a quasi-Newton method. This is in contrast to the AGD and AGD* algorithms which are first-order methods that do not utilize any Hessian.

For the auxiliary fixed-point problem, we apply a Newton step~\eqref{eq:travel_time_newton} for updating the travel times. The gradient of $U_{ij}$ is
\begin{align}
\nabla_{c^{*}_{ij}} U_{ij} = \frac{\partial c_{ij}}{\partial x_{ij}} \sum_{w \in \mathcal{W}} q^w \nabla_{c_{ij}^{w*(m)}}((F_{ij}^{w})')^{-1}.
\end{align}
We note that the resulting dual assignment algorithm (qN-AGD*) resembles the coordinate descent method, whose convergence properties are well-established~\citep{tseng2001convergence}. Furthermore, \textit{\textbf{Step 1.1.}} takes an additional projection of $x_{ij}^{w*(m+1)}$ onto $x_{ij}^{w*(m+1)} \leq 1$, corresponding to the feasible network flows of a traveler. Consequently, the gradient of $\eta_k^w$ (Eq.~\ref{eq:simplified_gradient}) is clipped at $q^w(\sum_{(i,j) \in \mathcal{E}}a_{ij} - b_k^w)$. As shown in \cite{zhang2019gradient}, as the local Lipschitz constants are often larger at early iterations, gradient clipping can accelerate algorithm convergence by adaptively scaling the gradient (to the clipped values), such that the global Lipschitz smoothness assumption can be relaxed. To this end, gradient clipping is applied at the early iterations (e.g., first 100), while it allows us to use a relatively large fixed step size without the need for a backtracking line search.

We consider the algorithm to have converged when the following two criteria both hold:
\begin{align}
R_1^{(m+1)} = \sum_{w\in\mathcal{W}} \frac{q^w}{Q}\frac{||Ax^{w*(m+1)} - b^w||_{1}}
{|\mathcal{V}|} \leq \epsilon_1 \; \text{and}\; R_2 = \frac{||t\left(x^{w*(m+1)}\cdot q^{|\mathcal{W}|\times 1} \right) - t^{*(m+1)}||_1}{|\mathcal{E}|} \leq \epsilon_2, \nonumber
\end{align}
where $||\cdot||_1$ denotes the $L_1$-norm, total demand is $Q=\sum_{w\in\mathcal{W}}q^w$, and $t(\cdot)$ is the vector function for link travel times depending on link flows. $R_1$ is the mean absolute error of the first-order condition for $\eta^*$ weighted by demands, and $R_2$ is the mean absolute error of $t^*$ to the auxiliary fixed point.

\subsection{Entropy perturbation function}
We now illustrate the gradient computations for the entropy perturbation function proposed in \cite{Fosgerau2021a}:
\begin{align}\label{eq:entropy_PURC}
    F_{ij}^w(x_{ij}^w) = (1 + x_{ij}^w)\ln(1 + x_{ij}^w) - x_{ij}^w, \forall (ij) \in \mathcal{E}, w \in \mathcal{W}.
\end{align}
The optimal link flows are
\begin{align}
x_{ik}^{w*} = \left[\exp\left(\eta_i^{w*} -c_{ik}^* - \eta_k^{w*}\right) -1\right]^+.
\end{align}
The scaled gradient $\tilde{\nabla}_{\eta_k^{w}} g({\eta}^{(m)})$ simplifies to
\begin{align}
\tilde{\nabla}_{\eta_k^{w}} G({\eta}^{(m)}) &= \frac{A_k x^{w*(m+1)} - b^w_k}{A_k \odot A_k\left(x_{ij}^{w*(m+1)} +1\right)^{|\mathcal{E}|\times 1} },
\end{align}
where $\odot$ is the Hadamard (element-wise) product. The gradient $\nabla_{c^{*}_{ij}} U_{ij} $ is
\begin{align}
\nabla_{c^{*}_{ij}} U_{ij} = -\frac{\partial c_{ij}}{\partial x_{ij}} \sum_{w \in \mathcal{W}} q^w \left(x_{ij}^{w*(m+1)} +1\right) \delta_{ij}^w -1,
\end{align}
where $\delta_{ij}^w =1$ if $x_{ij}^{w*(m+1)} > 0$, and $\delta_{ij}^w =0$, otherwise.

\section{Numerical experiments}
\label{sec:results}
In this section, we demonstrate the performance of our proposed dual assignment algorithm through a series of numerical experiments.

We use the entropy perturbation function \eqref{eq:entropy_PURC}, we initialize $\eta^{(0)}, c^{*(0)}$ with the costs from on all-or-nothing assignment not including the perturbation term, set the momentum acceleration parameter to $\alpha=10$, and
the stopping criteria parameters to $\epsilon_1 = \epsilon_2 = 10^{-5}$. We specify the link cost function simply as $c_{ij}(x_{ij}) = 0.5t_{ij}(x_{ij})$, where $t_{ij}$ is the Bureau of Public Roads (BPR) volume-delay function. For the proposed qN-AGD* algorithm, we fix the step sizes as $\gamma_1 = 0.5, \gamma_2 = 1$, which provide satisfactory performance in all numerical experiments. For reference, we also tested the performances of the AGD method~\citep{beck2009fast, oyama2022markovian}, the AGD* method~\citep{chambolle2015convergence}, and a primal algorithm based on three-operator splitting~\citep[][TOS]{davis2017three}, which allows efficient gradient projection for the primal formulation. For each reference algorithm, we report the best convergence result among step size choices of $\{10^{-4}, 10^{-5}, 10^{-6}\}$. All algorithms are implemented in PyTorch to enable GPU computation and run on a HPC cluster with an A100 GPU. The real networks are obtained from \cite{bargeraNetwork}, with the default volume-delay functions and demand matrix as provided.

\subsection{Convergence performance at different demand levels}
We begin by comparing the convergence performance of the proposed qN-AGD* algorithm to the qN-AGD, AGD* and AGD algorithms, under three demand levels (1q, 1.5q, 2q). For this comparison, we use the Sioux Falls network, which has 76 links, 24 nodes, 528 OD pairs and 360,600 trips (1q).
The key difference between the original AGD and our algorithm AGD* as defined in Algorithm~\ref{alg:dual_algorithm} is that the AGD uses a different momentum, which is $r_{m+1} = 0.5(1 + \sqrt{1 + 4r_{m}^2})$~\citep{beck2009fast}.
\begin{figure}[htb]
    \centering
    \includegraphics[width=0.8\textwidth]{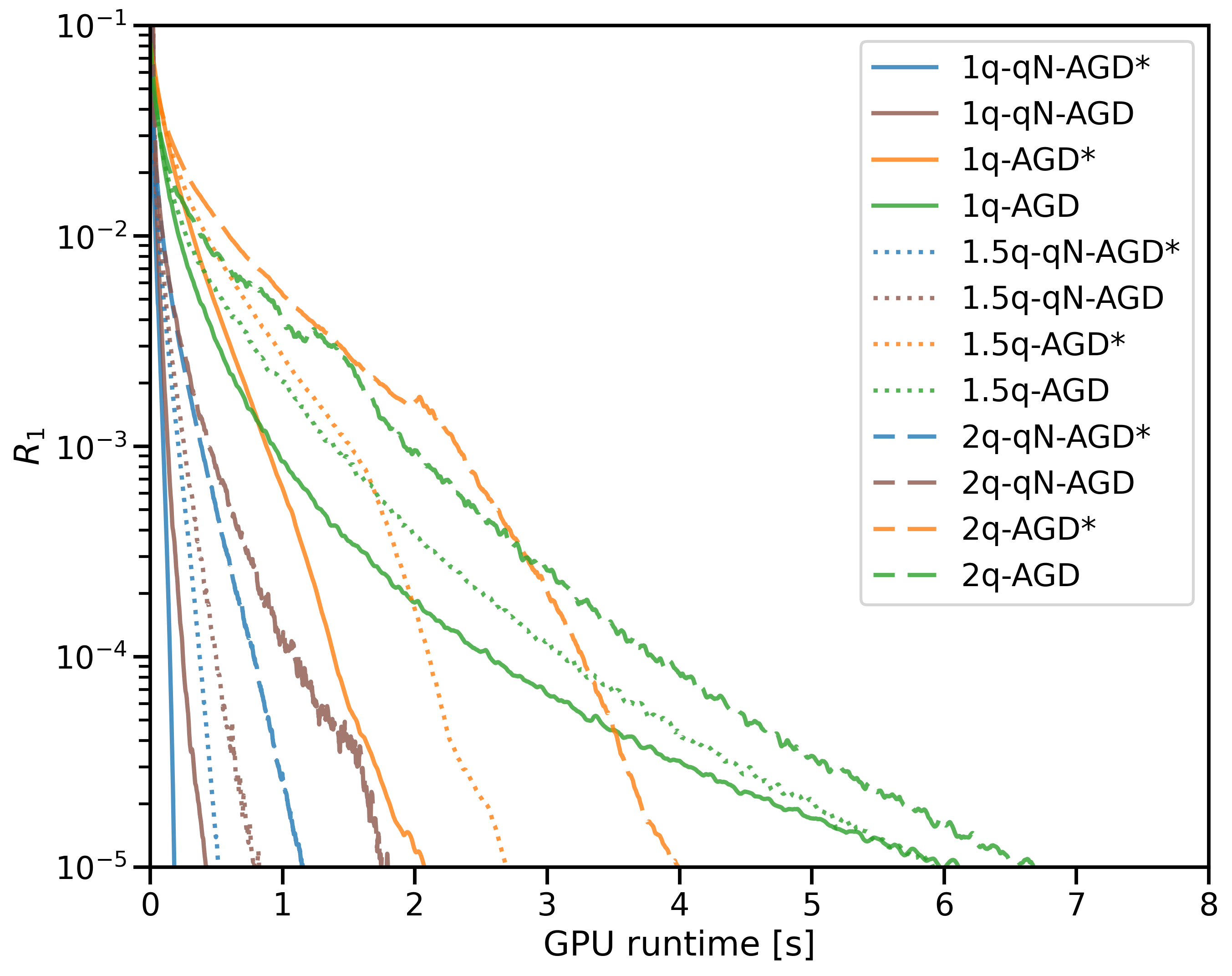}
    \caption{Convergence performance of the proposed quasi-Newton AGD* (qN-AGD*), qN-AGD, AGD*, and the original AGD, with three demand levels (1q, 1.5q, 2q). The horizontal axis shows the GPU runtimes in sec. The vertical axis shows the mean absolute error of the first-order condition.}
    \label{fig:alg_convergence_foc_gap}
\end{figure}

Figure \ref{fig:alg_convergence_foc_gap} shows the convergence of the first-order gap $R_1$ for the four algorithms, under three demand levels.
The proposed qN-AGD* algorithm is the fastest of the four algorithms at any demand level: the qN-AGD* under 2q demand is even faster than AGD* at 1q demand. In addition, the qN-AGD* runtime at different demand levels only has small variations, which suggests the proposed qN-AGD* has the potential to work well also for congested networks. The qN-AGD is generally second best, which indicates that the quasi-Newton scaling is important. 
Among AGD* and AGD, AGD* is faster for higher precision solutions (e.g., with $R_1 = 10^{-5}$), while AGD is faster for lower precision (e.g., with $R_1 = 10^{-2}$). This is because AGD* avoids oscillation with a more conservative updating scheme ($\alpha > 3$), which might be slower at the earlier iterations than AGD (approximated by $\alpha = 3$)~\citep{liang2022improving}.

\begin{figure}[ht]
    \centering
    \subfloat[$1q$]{\includegraphics[width=0.65\linewidth]{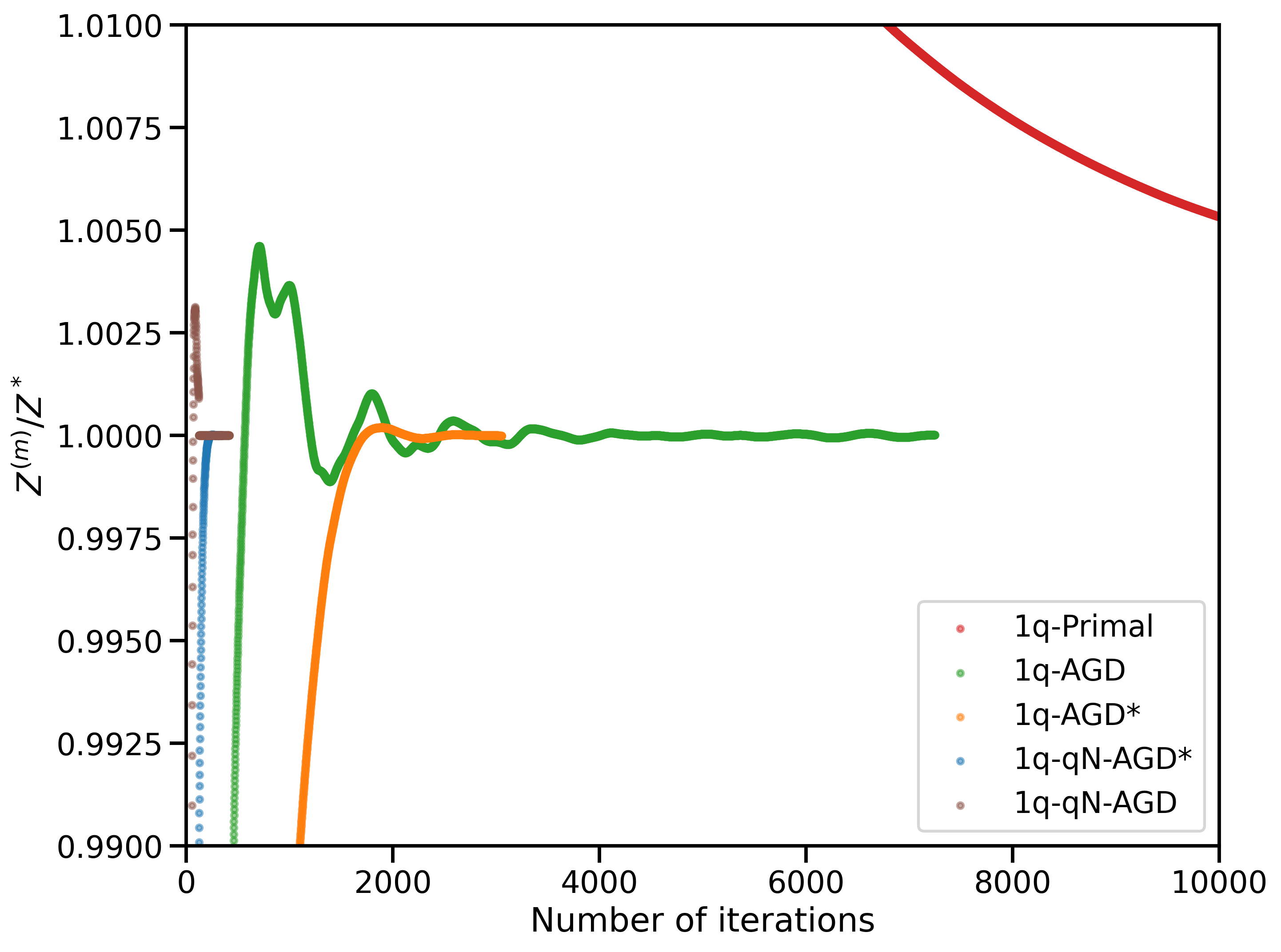}}\\
    \subfloat[$1.5q$]{\includegraphics[width=0.5\linewidth]{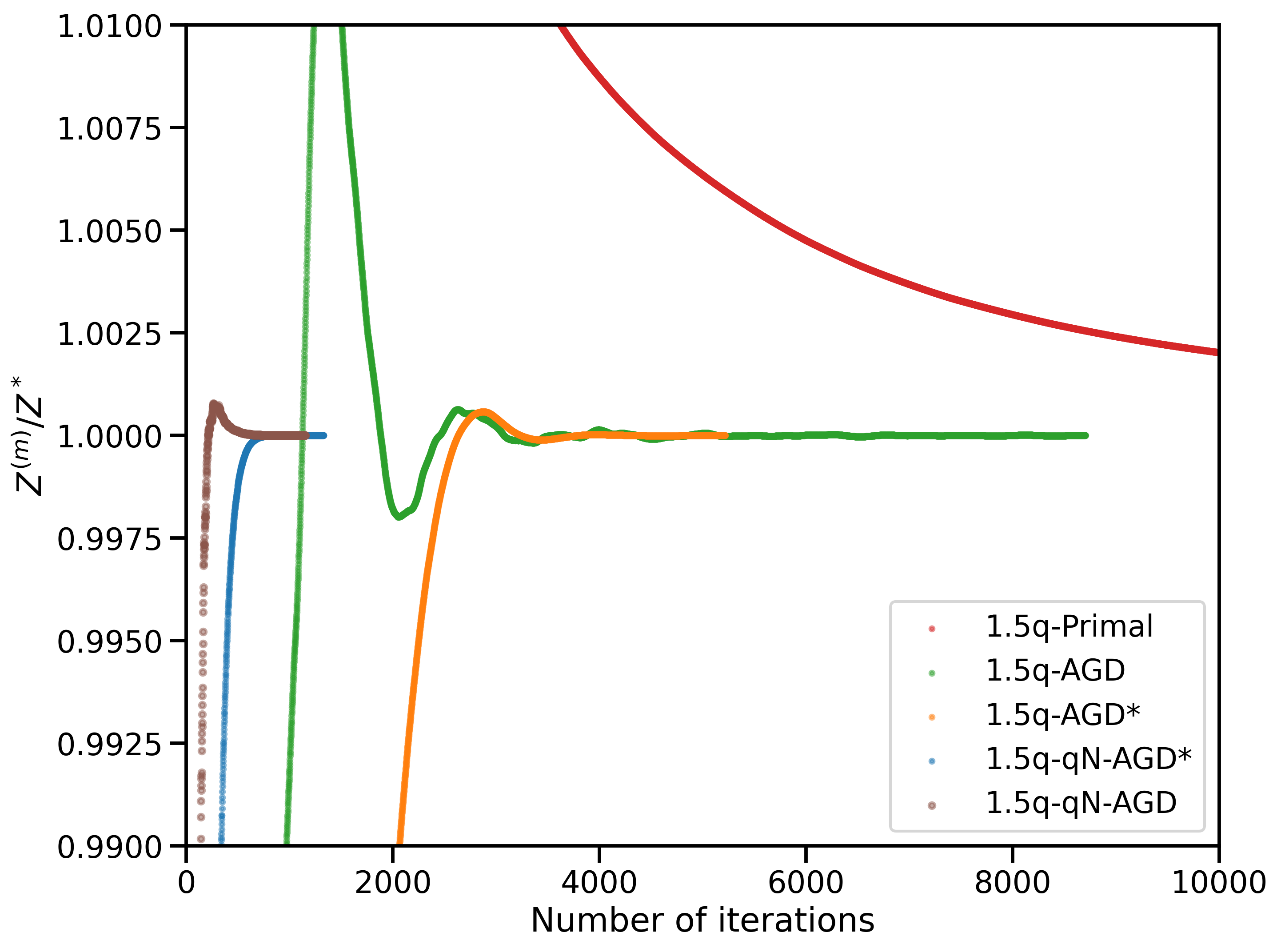}}
    \subfloat[$2q$]{\includegraphics[width=0.5\linewidth]{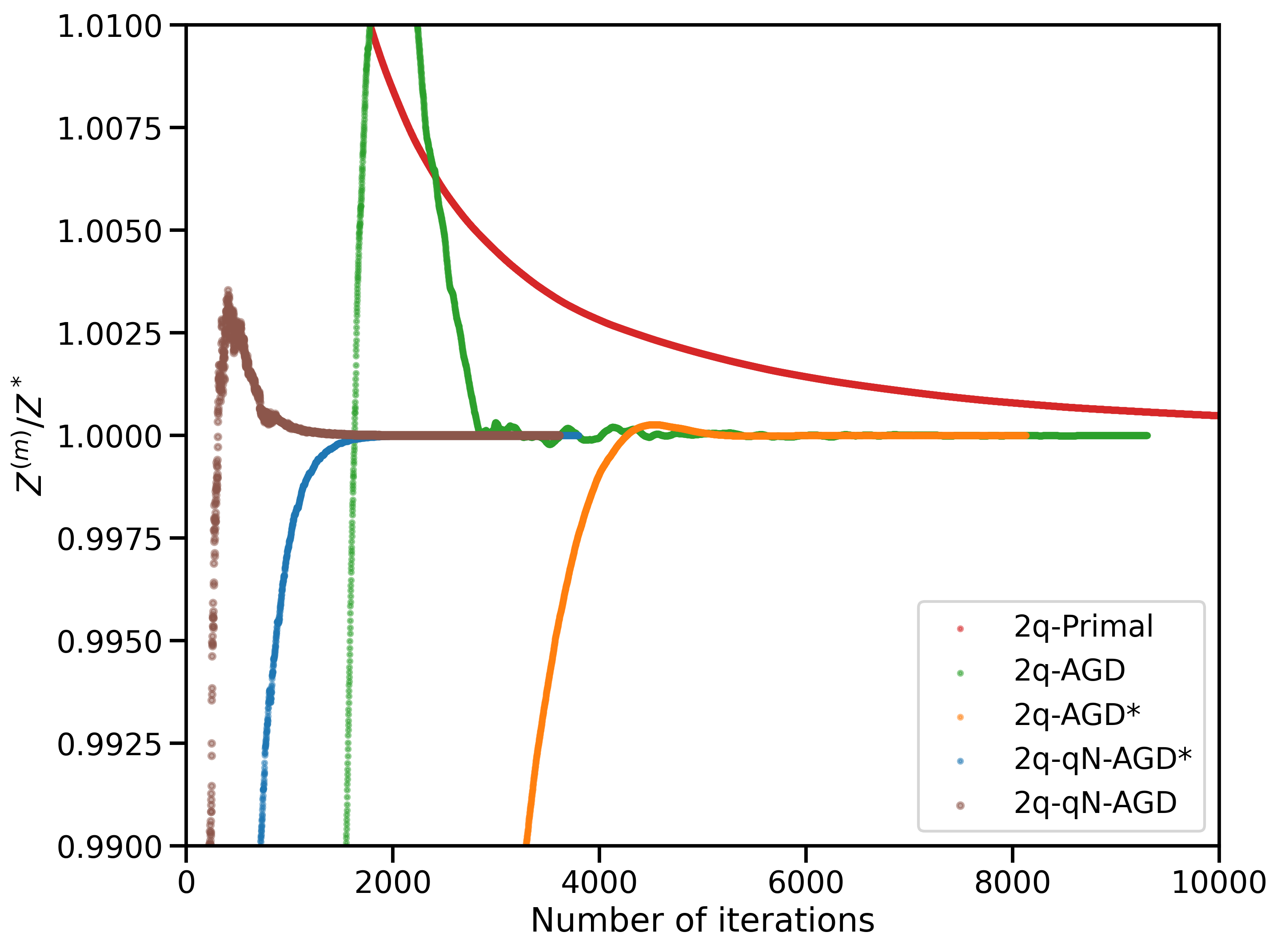}}
    \caption{Convergence performance of the proposed quasi-Newton AGD* (qN-AGD*), AGD*, the original AGD, and the primal TOS algorithm, with three demand levels (1q, 1.5q, 2q). The horizontal axis shows the number of iterations. The vertical axis shows the ratio $Z/Z^*$ between the current objective value $Z$ and the optimal objective value $Z^*$.}
    \label{fig:alg_convergence}
\end{figure}

Figure \ref{fig:alg_convergence} shows the convergence of the objective to the true value for the four dual algorithms and (in addition)the primal TOS algorithm, under three demand levels.
The primal TOS algorithm is our point of reference for the four dual algorithms (qN-AGD*, qN-AGD, AGD*, and AGD). In this experiment, all four dual algorithms converge to the optimal objective value, while the TOS has not converged after 10,000 iterations. Comparing the four dual algorithms, the qN-AGD* is the fastest to reach the optimal objective value, with no oscillation observed in its convergence trajectory under any demand level. Similarly, qN-AGD is faster than first-order AGD and AGD*, but exhibits oscillation as in the case of AGD. These results are consistent with the finding in the literature that the AGD* algorithm reduces the oscillation behavior of the original AGD.

\subsection{Runtime performance with different network sizes}
Given the superior performance of the dual algorithms compared to the primal algorithm, we focus on examining the performance of the dual algorithms for networks with sizes ranging from small to large in this subsection. We consider the default demand matrix (1q) for all the test networks, and the results are summarized in Table~\ref{tab:dual_runtime}.

\begin{table}[H]
\caption{Dual algorithm runtime performances}
\label{tab:dual_runtime}
\centering
\setlength\tabcolsep{8pt}
\begin{tabular}{ccrrrr}
  \toprule
   \multirow{2}{*}{\textbf{Network}} & \textbf{Problem size} & \multicolumn{4}{l}{\textbf{Runtime [s]}}  \\
    & $(|\mathcal{N}|\times |\mathcal{W}|)$ & qN-AGD* & qN-AGD &AGD* & AGD  \\
   \midrule
   Sioux Falls & $1.27\times 10^4$ & 0.25 & 0.43 & 1.95 & 5.78\\
   Berlin-Friedrichshain & $1.13\times 10^5$ & 2.17 & 7.23 &42.83 & 144.12 \\
   Berlin-Tiergarten & $2.31\times 10^5$  & 3.23& 7.70 &142.85 & 410.33 \\
   Anaheim & $5.85\times 10^5$ & 0.58 & 0.65 & 16.87 & 26.71\\
   \multirow{2}{*}{{\makecell{Berlin-Mitte-Prenzlauerberg\\-Friedrichshain-Center}}} &  $9.26\times 10^6$ & 68.99 & 72.01 &1,487.23 & 3,742.31 \\
   & & & & \\
   Chicago-Sketch & $8.69\times 10^7$ & 94.90 & 125.77 & 7,196.18& 9,322.30 \\
\bottomrule
\end{tabular}
\end{table}
We find that our qN-AGD* algorithm converges within a few seconds with the small to medium sized networks. For large networks like Berlin central areas and Chicago sketch we find also very satisfactory runtimes (within 100 seconds). Meanwhile, the runtime performance of qN-AGD is comparable to that of qN-AGD*, but slower. In contrast, AGD* and AGD are at least 20 times slower than qN-AGD* and qN-AGD in medium to large networks. Furthermore, in general, the runtime of AGD* and AGD increases superlinearly with respect to problem size. We note that the runtime for the Anaheim network is shorter in all four algorithms, compared to other networks with similar size. This could be related to the congestion level in the default network setups.

\subsubsection{Runtime performance at different network sizes and demand levels}

In this subsection, we systematically evaluate the computational efficiency of the proposed qN-AGD* algorithm and the qN-AGD (which exhibits similar performance) in real-size networks at different demand levels. We here do not consider AGD* and AGD as they clearly showed inferior convergence.

As shown in Figure \ref{fig:grid_net}, we construct the grid test networks as proposed in \citet{oyama2022markovian} by joining blocks of grids, to exemplify the effects of network size and demand levels on runtime performance. We assume the BPR function $t_{ij} = t_{ij, 0} \left(1 + 0.15 (\frac{x_{ij}}{\kappa_{ij}})^4 \right)$ with free-flow travel time $t_{ij, 0} = 1$, link capacity $\kappa_{ij} = \text{5,000}$, and link length $l_{ij} = 1$, for all links $(i,j) \in \mathcal{E}$. For each origin $o$, the demand for each destination $d \in \mathcal{D}$ is assumed to follow the gravity model:
\begin{align}
    q_o^d = q \cdot \frac{\exp(t_{od, 0})}{\sum_{d \in \mathcal{D}\setminus o} \exp(t_{od, 0})}, \forall o \in \mathcal{O}, \nonumber
\end{align}
where $t_{od, 0}$ is the shortest free-flow travel time between OD pair $od$, and $q$ is the total trips generated from each origin.

\begin{figure}[H]
    \centering
    {\includegraphics[width=0.95\linewidth]{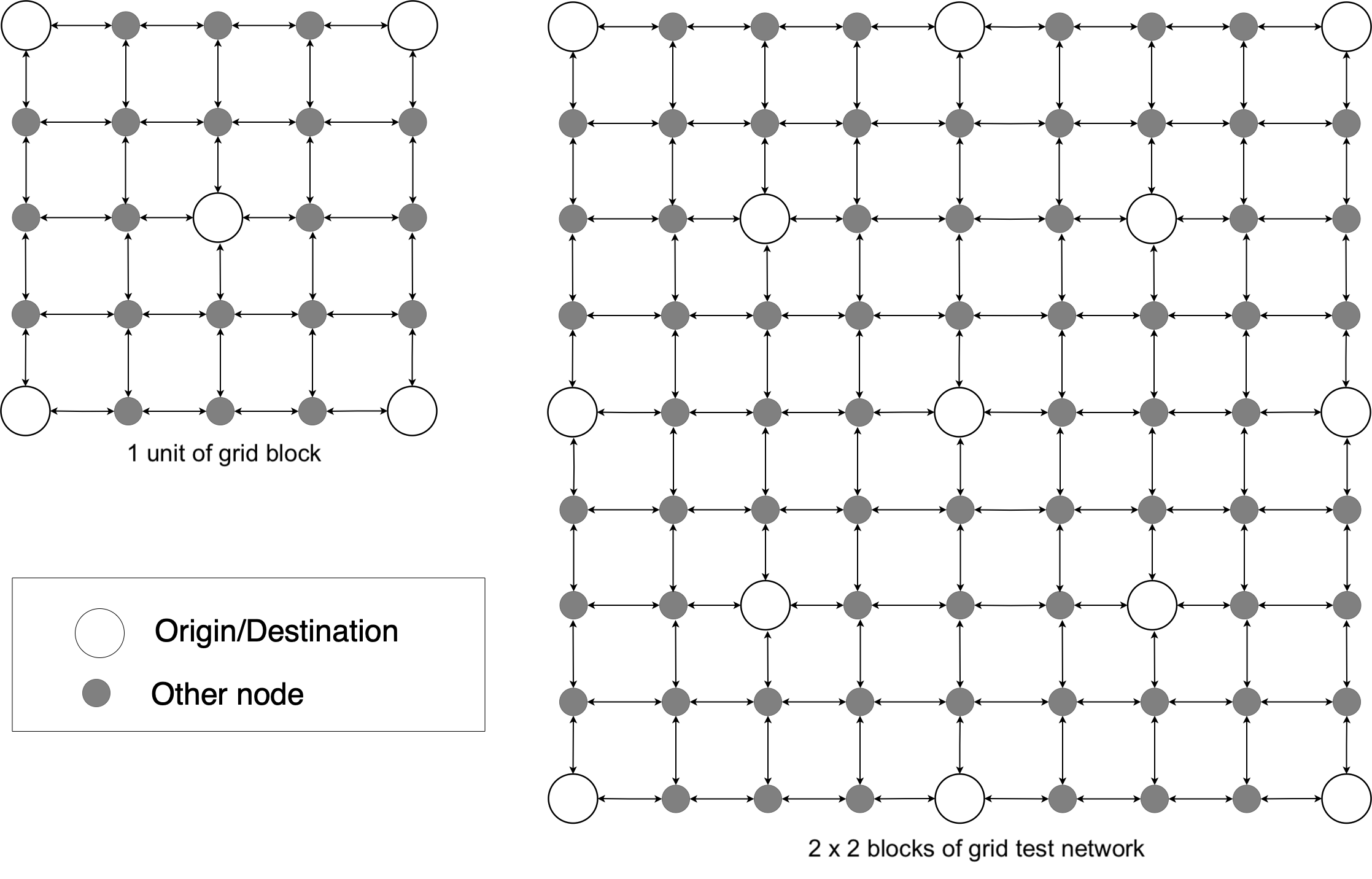}}
    \caption{Bidirectional $k \times k$ blocks of grid test network.}
    \label{fig:grid_net}    
\end{figure}

An indicator for the size of the assignment problem is the number of decision variables, i.e., the number of nodes times the number of traveler types. As shown in Figure~\ref{fig:alg_runtime}, we find the runtime depends about linearly on the problem size. This suggests that our proposed algorithm will scale well to larger networks. Furthermore, the runtime only increases slightly with increasing demands. Note that the grid test networks are congested under our setting. This result further suggests that our proposed algorithm is suitable also for congested networks.

\begin{figure}[H]
    \centering
    \includegraphics[width=0.75\textwidth]{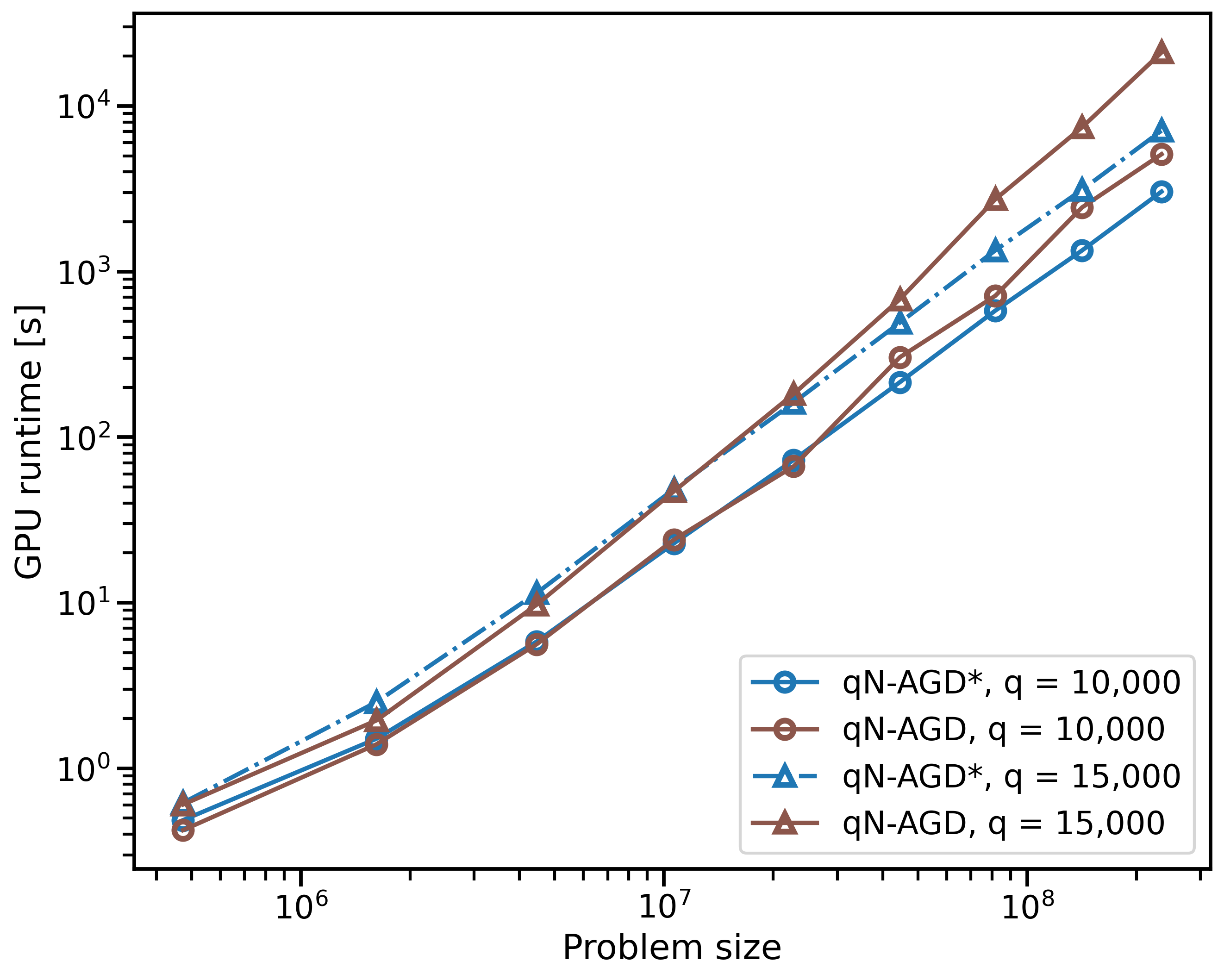}
    \caption{Runtime performance to problem size ($k$ ranging from 4 to 12)}
    \label{fig:alg_runtime}
\end{figure}

\section{Conclusion}\label{sec:conclusion}
This paper has proposed the perturbed utility stochastic traffic assignment model and an accompanying accelerated gradient-based algorithm based on the equivalent dual formulation of the traffic assignment problem. The dual assignment problem is unconstrained with closed-form stochastic network loading, which helps to make our proposed qN-AGD* algorithm very fast.

Our simulation evidence suggests that our proposed algorithm will scale well to larger problems. This is important for making the perturbed utility route choice model competitive with other route choice models for large-scale applications. This is also valuable, as the perturbed utility route choice model has several significant advantages over other route choice models. In particular, it does not require any choice set generation but simply uses the complete network as it is. It generates realistic substitution patterns directly from the network structure. Most of the network is inactive for any origin-destination pair. Not least, it is fast and easy to estimate using linear regression.

%

%

\newpage
\bibliography{MF,mybib}

\begin{thebibliography}{68}
\providecommand{\natexlab}[1]{#1}
\providecommand{\url}[1]{\texttt{#1}}
\expandafter\ifx\csname urlstyle\endcsname\relax
  \providecommand{\doi}[1]{doi: #1}\else
  \providecommand{\doi}{doi: \begingroup \urlstyle{rm}\Url}\fi

\bibitem[Akamatsu(1996)]{akamatsu1996cyclic}
T.~Akamatsu.
\newblock Cyclic flows, markov process and stochastic traffic assignment.
\newblock \emph{Transportation Research Part B: Methodological}, 30\penalty0
  (5):\penalty0 369--386, 1996.

\bibitem[Akamatsu(1997)]{akamatsu1997decomposition}
T.~Akamatsu.
\newblock Decomposition of path choice entropy in general transport networks.
\newblock \emph{Transportation Science}, 31\penalty0 (4):\penalty0 349--362,
  1997.

\bibitem[Allen and Rehbeck(2019)]{Allen2019}
R.~Allen and J.~Rehbeck.
\newblock {Identification With Additively Separable Heterogeneity}.
\newblock \emph{Econometrica}, 87\penalty0 (3):\penalty0 1021--1054, 5 2019.
\newblock ISSN 0012-9682.
\newblock \doi{10.3982/ecta15867}.
\newblock URL \url{https://www.ssrn.com/abstract=3073945}.

\bibitem[Attouch and Peypouquet(2016)]{attouch2016rate}
H.~Attouch and J.~Peypouquet.
\newblock The rate of convergence of nesterov's accelerated forward-backward
  method is actually faster than 1/k\^{}2.
\newblock \emph{SIAM Journal on Optimization}, 26\penalty0 (3):\penalty0
  1824--1834, 2016.

\bibitem[Bar-Gera(2002)]{bar2002origin}
H.~Bar-Gera.
\newblock Origin-based algorithm for the traffic assignment problem.
\newblock \emph{Transportation Science}, 36\penalty0 (4):\penalty0 398--417,
  2002.

\bibitem[Bar-Gera(2016)]{bargeraNetwork}
H.~Bar-Gera.
\newblock Transportation networks for research.
\newblock 2016.
\newblock URL \url{https://github.com/bstabler/TransportationNetworks}.

\bibitem[Beck and Teboulle(2009)]{beck2009fast}
A.~Beck and M.~Teboulle.
\newblock A fast iterative shrinkage-thresholding algorithm for linear inverse
  problems.
\newblock \emph{SIAM journal on imaging sciences}, 2\penalty0 (1):\penalty0
  183--202, 2009.

\bibitem[Beckmann et~al.(1956)Beckmann, McGuire, and
  Winsten]{beckmann1956studies}
M.~Beckmann, C.~B. McGuire, and C.~B. Winsten.
\newblock Studies in the economics of transportation.
\newblock Technical report, 1956.

\bibitem[Bekhor and Prashker(2001)]{bekhor2001stochastic}
S.~Bekhor and J.~Prashker.
\newblock Stochastic user equilibrium formulation for generalized nested logit
  model.
\newblock \emph{Transportation Research Record}, 1752\penalty0 (1):\penalty0
  84--90, 2001.

\bibitem[Bekhor and Prashker(1999)]{bekhor1999formulations}
S.~Bekhor and J.~N. Prashker.
\newblock Formulations of extended logit stochastic user equilibrium
  assignments.
\newblock In \emph{14th International Symposium on Transportation and Traffic
  TheoryTransportation Research Institute}, 1999.

\bibitem[Bekhor and Toledo(2005)]{bekhor2005investigating}
S.~Bekhor and T.~Toledo.
\newblock Investigating path-based solution algorithms to the stochastic user
  equilibrium problem.
\newblock \emph{Transportation Research Part B: Methodological}, 39\penalty0
  (3):\penalty0 279--295, 2005.

\bibitem[Bekhor et~al.(2009)Bekhor, Toledo, and Reznikova]{bekhor2009path}
S.~Bekhor, T.~Toledo, and L.~Reznikova.
\newblock A path-based algorithm for the cross-nested logit stochastic user
  equilibrium traffic assignment.
\newblock \emph{Computer-Aided Civil and Infrastructure Engineering},
  24\penalty0 (1):\penalty0 15--25, 2009.

\bibitem[Ben-Akiva et~al.(1999)Ben-Akiva, Mcfadden, G{\"{a}}rling, Gopinath,
  Walker, Bolduc, Boersch-Supan, Delqui{\'{e}}, Larichev, and
  Morikawa]{Ben-Akiva1999}
M.~Ben-Akiva, D.~Mcfadden, T.~G{\"{a}}rling, D.~Gopinath, J.~Walker, D.~Bolduc,
  A.~Boersch-Supan, P.~Delqui{\'{e}}, O.~Larichev, and T.~Morikawa.
\newblock {Extended framework for modelling choice behavior}.
\newblock \emph{Marketing Letters}, 10\penalty0 (3):\penalty0 187--203, 1999.
\newblock ISSN 0923-0645.
\newblock \doi{10.1023/A:1020254301302}.

\bibitem[Boyd et~al.(2004)Boyd, Boyd, and Vandenberghe]{boyd2004convex}
S.~Boyd, S.~P. Boyd, and L.~Vandenberghe.
\newblock \emph{Convex optimization}.
\newblock Cambridge university press, 2004.

\bibitem[Cascetta et~al.(1996)Cascetta, Nuzzolo, Russo, and
  Vitetta]{Cascetta1996}
E.~Cascetta, A.~Nuzzolo, F.~Russo, and A.~Vitetta.
\newblock {A modified logit route choice model overcoming path overlapping
  problems: Specification and some calibration results for interurban
  networks}.
\newblock Technical report, Proceedings of the 13th International Symposium on
  the Theory of Road Traffic Flow, Lyon, France, 1996.
\newblock URL
  \url{http://www.alkox.informatik.hu-berlin.de/lehre/lvws0809/verkehr/logit.pdf}.

\bibitem[Chambolle and Dossal(2015)]{chambolle2015convergence}
A.~Chambolle and C.~Dossal.
\newblock On the convergence of the iterates of the “fast iterative
  shrinkage/thresholding algorithm”.
\newblock \emph{Journal of Optimization theory and Applications}, 166:\penalty0
  968--982, 2015.

\bibitem[Chu(1989)]{Chu1989}
C.~Chu.
\newblock {A Paired Combinational Logit Model for Travel Demand Analysis}.
\newblock Technical report, Proceedings of the fifth World Conference on
  Transportation Research, Yokahama, Japan, 1989.

\bibitem[Daganzo(1982)]{daganzo1982unconstrained}
C.~F. Daganzo.
\newblock Unconstrained extremal formulation of some transportation equilibrium
  problems.
\newblock \emph{Transportation Science}, 16\penalty0 (3):\penalty0 332--360,
  1982.

\bibitem[Daganzo and Sheffi(1977)]{daganzo1977stochastic}
C.~F. Daganzo and Y.~Sheffi.
\newblock On stochastic models of traffic assignment.
\newblock \emph{Transportation science}, 11\penalty0 (3):\penalty0 253--274,
  1977.

\bibitem[Daly and Bierlaire(2006)]{daly2006general}
A.~Daly and M.~Bierlaire.
\newblock A general and operational representation of generalised extreme value
  models.
\newblock \emph{Transportation Research Part B: Methodological}, 40\penalty0
  (4):\penalty0 285--305, 2006.

\bibitem[Davis and Yin(2017)]{davis2017three}
D.~Davis and W.~Yin.
\newblock A three-operator splitting scheme and its optimization applications.
\newblock \emph{Set-valued and variational analysis}, 25:\penalty0 829--858,
  2017.

\bibitem[De~La~Barra et~al.(1993)De~La~Barra, Perez, and
  Anez]{de1993multidimensional}
T.~De~La~Barra, B.~Perez, and J.~Anez.
\newblock Multidimensional path search and assignment.
\newblock In \emph{PTRC Summer Annual Meeting, 21st, 1993, University of
  Manchester, United Kingdom}, 1993.

\bibitem[Dial(1971)]{dial1971probabilistic}
R.~B. Dial.
\newblock A probabilistic multipath traffic assignment model which obviates
  path enumeration.
\newblock \emph{Transportation research}, 5\penalty0 (2):\penalty0 83--111,
  1971.

\bibitem[Dial(2006)]{dial2006path}
R.~B. Dial.
\newblock A path-based user-equilibrium traffic assignment algorithm that
  obviates path storage and enumeration.
\newblock \emph{Transportation Research Part B: Methodological}, 40\penalty0
  (10):\penalty0 917--936, 2006.

\bibitem[Du et~al.(2021)Du, Tan, and Chen]{du2021faster}
M.~Du, H.~Tan, and A.~Chen.
\newblock A faster path-based algorithm with barzilai-borwein step size for
  solving stochastic traffic equilibrium models.
\newblock \emph{European Journal of Operational Research}, 290\penalty0
  (3):\penalty0 982--999, 2021.

\bibitem[Duncan et~al.(2020)Duncan, Watling, Connors, Rasmussen, and
  Nielsen]{duncan2020path}
L.~C. Duncan, D.~P. Watling, R.~D. Connors, T.~K. Rasmussen, and O.~A. Nielsen.
\newblock Path size logit route choice models: Issues with current models, a
  new internally consistent approach, and parameter estimation on a large-scale
  network with gps data.
\newblock \emph{Transportation Research Part B: Methodological}, 135:\penalty0
  1--40, 2020.

\bibitem[Fisk(1980)]{fisk1980some}
C.~Fisk.
\newblock Some developments in equilibrium traffic assignment.
\newblock \emph{Transportation Research Part B: Methodological}, 14\penalty0
  (3):\penalty0 243--255, 1980.

\bibitem[Fl{\"o}tter{\"o}d and Bierlaire(2013)]{flotterod2013metropolis}
G.~Fl{\"o}tter{\"o}d and M.~Bierlaire.
\newblock Metropolis--hastings sampling of paths.
\newblock \emph{Transportation Research Part B: Methodological}, 48:\penalty0
  53--66, 2013.

\bibitem[Fosgerau and McFadden(2012)]{McFadden2012}
M.~Fosgerau and D.~L. McFadden.
\newblock {A theory of the perturbed consumer with general budgets}.
\newblock \emph{NBER Working Paper}, pages 1--27, 3 2012.
\newblock \doi{10.3386/w17953}.
\newblock URL \url{http://www.nber.org/papers/w17953
  http://www.nber.org/papers/w17953.pdf}.

\bibitem[Fosgerau et~al.(2013)Fosgerau, Frejinger, and
  Karlstrom]{fosgerau2013link}
M.~Fosgerau, E.~Frejinger, and A.~Karlstrom.
\newblock A link based network route choice model with unrestricted choice set.
\newblock \emph{Transportation Research Part B: Methodological}, 56:\penalty0
  70--80, 2013.

\bibitem[Fosgerau et~al.(2022)Fosgerau, Paulsen, and Rasmussen]{Fosgerau2021a}
M.~Fosgerau, M.~Paulsen, and T.~K. Rasmussen.
\newblock {A perturbed utility route choice model}.
\newblock \emph{Transportation Research Part C}, 136\penalty0 (103514), 2022.
\newblock \doi{https://doi.org/10.1016/j.trc.2021.103514}.
\newblock URL \url{https://arxiv.org/abs/2103.13784}.

\bibitem[Fosgerau et~al.(2023)Fosgerau, Lukawska, Paulsen, and
  Rasmussen]{Fosgerau2022}
M.~Fosgerau, M.~Lukawska, M.~Paulsen, and T.~K. Rasmussen.
\newblock {Bikeability and the induced demand for cycling}.
\newblock \emph{Proceedings of the National Academy of Sciences}, 120\penalty0
  (16), 2023.

\bibitem[Frejinger et~al.(2009)Frejinger, Bierlaire, and
  Ben-Akiva]{Frejinger2009}
E.~Frejinger, M.~Bierlaire, and M.~Ben-Akiva.
\newblock {Sampling of alternatives for route choice modeling}.
\newblock \emph{Transportation Research Part B: Methodological}, 43\penalty0
  (10):\penalty0 984--994, 12 2009.
\newblock ISSN 01912615.
\newblock \doi{10.1016/j.trb.2009.03.001}.
\newblock URL
  \url{http://www.sciencedirect.com/science/article/pii/S0191261509000381
  http://www.sciencedirect.com/science/article/pii/S0191261509000381/pdfft?md5=541227d68aa389ecef839533f46e21be&pid=1-s2.0-S0191261509000381-main.pdf}.

\bibitem[Hofbauer and Sandholm(2002)]{Hofbauer2002}
J.~Hofbauer and W.~H. Sandholm.
\newblock {On the global convergence of stochastic fictitious play}.
\newblock \emph{Econometrica}, 70\penalty0 (6):\penalty0 2265--2294, 2002.
\newblock ISSN 0012-9682.
\newblock \doi{10.1111/j.1468-0262.2002.00440.x}.
\newblock URL \url{http://www.jstor.org/stable/3081987
  http://about.jstor.org/terms http://discovery.ucl.ac.uk/181004/}.

\bibitem[Jayakrishnan et~al.(1994)Jayakrishnan, Tsai, Prashker, and
  Rajadhyaksha]{jayakrishnan1994faster}
R.~Jayakrishnan, W.~T. Tsai, J.~N. Prashker, and S.~Rajadhyaksha.
\newblock A faster path-based algorithm for traffic assignment.
\newblock 1994.

\bibitem[LeCun et~al.(2015)LeCun, Bengio, and Hinton]{lecun2015deep}
Y.~LeCun, Y.~Bengio, and G.~Hinton.
\newblock Deep learning.
\newblock \emph{nature}, 521\penalty0 (7553):\penalty0 436--444, 2015.

\bibitem[Liang et~al.(2022)Liang, Luo, and Schonlieb]{liang2022improving}
J.~Liang, T.~Luo, and C.-B. Schonlieb.
\newblock Improving “fast iterative shrinkage-thresholding algorithm”:
  faster, smarter, and greedier.
\newblock \emph{SIAM Journal on Scientific Computing}, 44\penalty0
  (3):\penalty0 A1069--A1091, 2022.

\bibitem[Liu et~al.(2009)Liu, He, and He]{liu2009method}
H.~X. Liu, X.~He, and B.~He.
\newblock Method of successive weighted averages (mswa) and self-regulated
  averaging schemes for solving stochastic user equilibrium problem.
\newblock \emph{Networks and Spatial Economics}, 9:\penalty0 485--503, 2009.

\bibitem[Maher(1998)]{maher1998algorithms}
M.~Maher.
\newblock Algorithms for logit-based stochastic user equilibrium assignment.
\newblock \emph{Transportation Research Part B: Methodological}, 32\penalty0
  (8):\penalty0 539--549, 1998.

\bibitem[Mai(2016)]{mai2016method}
T.~Mai.
\newblock A method of integrating correlation structures for a generalized
  recursive route choice model.
\newblock \emph{Transportation Research Part B: Methodological}, 93:\penalty0
  146--161, 2016.

\bibitem[Mai and Frejinger(2022)]{mai2022undiscounted}
T.~Mai and E.~Frejinger.
\newblock Undiscounted recursive path choice models: Convergence properties and
  algorithms.
\newblock \emph{Transportation Science}, 56\penalty0 (6):\penalty0 1469--1482,
  2022.

\bibitem[Mai et~al.(2015)Mai, Fosgerau, and Frejinger]{mai2015nested}
T.~Mai, M.~Fosgerau, and E.~Frejinger.
\newblock A nested recursive logit model for route choice analysis.
\newblock \emph{Transportation Research Part B: Methodological}, 75:\penalty0
  100--112, 2015.

\bibitem[McFadden(1978)]{McFadden1978}
D.~McFadden.
\newblock {Modelling the choice of residential location}.
\newblock In A.~Karlquist, F.~Snickars, and J.~W. Weibull, editors,
  \emph{Spatial Interaction Theory and Planning Models}, volume 673, pages
  75--96. North Holland, Amsterdam, 1978.
\newblock ISBN 1111111111.
\newblock URL \url{http://cowles.econ.yale.edu/P/cd/d04b/d0477.pdf}.

\bibitem[McFadden(1981)]{McFadden1981}
D.~McFadden.
\newblock {Econometric Models of Probabilistic Choice}.
\newblock In C.~Manski and D.~McFadden, editors, \emph{Structural Analysis of
  Discrete Data with Econometric Applications}, pages 198--272. MIT Press,
  Cambridge, MA, USA, 1981.
\newblock ISBN 0262131595.
\newblock \doi{10.1086/296093}.

\bibitem[Milgrom and Segal(2002)]{milgrom2002envelope}
P.~Milgrom and I.~Segal.
\newblock Envelope theorems for arbitrary choice sets.
\newblock \emph{Econometrica}, 70\penalty0 (2):\penalty0 583--601, 2002.

\bibitem[Nesterov(1983)]{nesterov1983method}
Y.~E. Nesterov.
\newblock A method of solving a convex programming problem with convergence
  rate ${{O}}\left(\frac{1}{k^2}\right)$.
\newblock In \emph{Doklady Akademii Nauk}, volume 269, pages 543--547. Russian
  Academy of Sciences, 1983.

\bibitem[Nie(2012)]{nie2012note}
Y.~Nie.
\newblock A note on bar-gera's algorithm for the origin-based traffic
  assignment problem.
\newblock \emph{Transportation Science}, 46\penalty0 (1):\penalty0 27--38,
  2012.

\bibitem[Nie(2010)]{nie2010class}
Y.~M. Nie.
\newblock A class of bush-based algorithms for the traffic assignment problem.
\newblock \emph{Transportation Research Part B: Methodological}, 44\penalty0
  (1):\penalty0 73--89, 2010.

\bibitem[Oyama(2023)]{oyama2023capturing}
Y.~Oyama.
\newblock Capturing positive network attributes during the estimation of
  recursive logit models: A prism-based approach.
\newblock \emph{Transportation Research Part C: Emerging Technologies},
  147:\penalty0 104014, 2023.

\bibitem[Oyama and Hato(2019)]{oyama2019prism}
Y.~Oyama and E.~Hato.
\newblock Prism-based path set restriction for solving markovian traffic
  assignment problem.
\newblock \emph{Transportation Research Part B: Methodological}, 122:\penalty0
  528--546, 2019.

\bibitem[Oyama et~al.(2022)Oyama, Hara, and Akamatsu]{oyama2022markovian}
Y.~Oyama, Y.~Hara, and T.~Akamatsu.
\newblock Markovian traffic equilibrium assignment based on network generalized
  extreme value model.
\newblock \emph{Transportation Research Part B: Methodological}, 155:\penalty0
  135--159, 2022.

\bibitem[Polyak(1964)]{polyak1964some}
B.~T. Polyak.
\newblock Some methods of speeding up the convergence of iteration methods.
\newblock \emph{Ussr computational mathematics and mathematical physics},
  4\penalty0 (5):\penalty0 1--17, 1964.

\bibitem[Powell and Sheffi(1982)]{powell1982convergence}
W.~B. Powell and Y.~Sheffi.
\newblock The convergence of equilibrium algorithms with predetermined step
  sizes.
\newblock \emph{Transportation Science}, 16\penalty0 (1):\penalty0 45--55,
  1982.

\bibitem[Prato(2009)]{Prato2009}
C.~G. Prato.
\newblock {Route choice modeling: Past, present and future research
  directions}.
\newblock \emph{Journal of Choice Modelling}, 2\penalty0 (1):\penalty0 65--100,
  2009.
\newblock ISSN 17555345.
\newblock \doi{10.1016/S1755-5345(13)70005-8}.
\newblock URL
  \url{http://www.sciencedirect.com/science/article/pii/S1755534513700058
  http://www.sciencedirect.com/science/article/pii/S1755534513700058#
  http://www.sciencedirect.com/science/article/pii/S1755534513700058/pdfft?md5=047f3816cb6a3d636aa9eab4915af0f8&pid=1-s2.0}.

\bibitem[Prato and Bekhor(2006)]{prato2006applying}
C.~G. Prato and S.~Bekhor.
\newblock Applying branch-and-bound technique to route choice set generation.
\newblock \emph{Transportation Research Record}, 1985\penalty0 (1):\penalty0
  19--28, 2006.

\bibitem[Rockafellar(1970)]{Rockafellar1970}
R.~T. Rockafellar.
\newblock \emph{{Convex Analysis}}.
\newblock Princeton University Press, Princeton, N.J., 1970.
\newblock ISBN 0691080690.
\newblock \doi{10.1515/9781400873173}.

\bibitem[Sheffi(1985)]{sheffi1985urban}
Y.~Sheffi.
\newblock \emph{Urban transportation networks}, volume~6.
\newblock Prentice-Hall, Englewood Cliffs, NJ, 1985.

\bibitem[Sheffi and Powell(1982)]{sheffi1982algorithm}
Y.~Sheffi and W.~B. Powell.
\newblock An algorithm for the equilibrium assignment problem with random link
  times.
\newblock \emph{Networks}, 12\penalty0 (2):\penalty0 191--207, 1982.

\bibitem[S{\o}rensen and Fosgerau(2020)]{Sorensen2019}
J.~R.-V. S{\o}rensen and M.~Fosgerau.
\newblock {How McFadden met Rockafellar and learned to do more with less}.
\newblock 2020.

\bibitem[Sutskever et~al.(2013)Sutskever, Martens, Dahl, and
  Hinton]{sutskever2013importance}
I.~Sutskever, J.~Martens, G.~Dahl, and G.~Hinton.
\newblock On the importance of initialization and momentum in deep learning.
\newblock In \emph{International conference on machine learning}, pages
  1139--1147. PMLR, 2013.

\bibitem[Ton et~al.(2018)Ton, Duives, Cats, and Hoogendoorn]{ton2018evaluating}
D.~Ton, D.~Duives, O.~Cats, and S.~Hoogendoorn.
\newblock Evaluating a data-driven approach for choice set identification using
  gps bicycle route choice data from amsterdam.
\newblock \emph{Travel behaviour and society}, 13:\penalty0 105--117, 2018.

\bibitem[Tseng(2001)]{tseng2001convergence}
P.~Tseng.
\newblock Convergence of a block coordinate descent method for
  nondifferentiable minimization.
\newblock \emph{Journal of optimization theory and applications}, 109:\penalty0
  475--494, 2001.

\bibitem[Vovsha and Bekhor(1998)]{Vovsha1998}
P.~Vovsha and S.~Bekhor.
\newblock {Link-nested logit model of route choice: overcoming route
  overlapping problem}.
\newblock \emph{Transportation Research Record: Journal of Transportation
  Research Board}, 1645\penalty0 (98):\penalty0 133--142, 1998.
\newblock ISSN 0361-1981.
\newblock \doi{10.3141/1645-17}.

\bibitem[Watling et~al.(2018)Watling, Rasmussen, Prato, and
  Nielsen]{watling2018stochastic}
D.~P. Watling, T.~K. Rasmussen, C.~G. Prato, and O.~A. Nielsen.
\newblock Stochastic user equilibrium with a bounded choice model.
\newblock \emph{Transportation Research Part B: Methodological}, 114:\penalty0
  254--280, 2018.

\bibitem[Wen and Koppelman(2001)]{Wen2001}
C.-H. Wen and F.~Koppelman.
\newblock {The Generalized Nested Logit Model}.
\newblock \emph{Transportation Research B}, 35\penalty0 (7):\penalty0 627--641,
  2001.

\bibitem[Xie and Waller(2012)]{xie2012stochastic}
C.~Xie and S.~T. Waller.
\newblock Stochastic traffic assignment, lagrangian dual, and unconstrained
  convex optimization.
\newblock \emph{Transportation Research Part B: Methodological}, 46\penalty0
  (8):\penalty0 1023--1042, 2012.

\bibitem[Yao and Bekhor(2022)]{yao2022variational}
R.~Yao and S.~Bekhor.
\newblock A variational autoencoder approach for choice set generation and
  implicit perception of alternatives in choice modeling.
\newblock \emph{Transportation Research Part B: Methodological}, 158:\penalty0
  273--294, 2022.

\bibitem[Zhang et~al.(2019)Zhang, He, Sra, and Jadbabaie]{zhang2019gradient}
J.~Zhang, T.~He, S.~Sra, and A.~Jadbabaie.
\newblock Why gradient clipping accelerates training: A theoretical
  justification for adaptivity.
\newblock \emph{arXiv preprint arXiv:1905.11881}, 2019.

\end{thebibliography}

\end{document}
\newpage
\appendix{}
\section*{Appendix I. }

\section{Discussion}
\label{sec:discussion}

\subsection{Comparison between PURC and recursive logit}

Although both PURC and RL are based on PUM, the underlying modeling assumptions are different. One key difference in perturbation $F(p_{ij})$ is that PURC considers the \textit{global} link choice probability across the whole network, while RL considers the \textit{local} link choice probability at each node. Specifically, PURC defines $p_{ij} = x_{ij}$, while for RL we have $p_{ij} = x_{ij}/\sum_{j:(i,j)}x_{ij}$. This further implies that PURC represents path choice probability simply as path flow, while RL requires multiplying \textit{local} link choice probability recursively for a path. This shifts in modeling paradigm is owing to the recognition of route choice model as a PUM, which allows flexible specification of the perturbation function. Consequently, efficiency in model estimation is significantly improved compared to recursive logit-based route choice models.

\subsection{Substitution patterns - preliminary}
If there is any common link on the shortest paths from node $j$ and $l (l:(i, l), l\neq k)$, the minimum costs $\eta_j$ and $\eta_l$ both depend on the overlap links. Suppose $\eta_k$ remains the same, cost decrease on overlap links increase both $x_{ij}^*$ and $x_{il}^*$, and consequently IIA does not hold in Eq.~\eqref{eq:PURC_prob_ratio}. However, such results are network-dependent, since the correlation is only captured if there is any overlap link for shortest paths from node $j$ and $l$. Although both $x_{ij}$ and $x_{il}$ incidents node $i$, correlation at the nodes are not necessarily captured. Recall that $\eta_i = \min_{l :(i, l)} \left\{ \eta_l + F'(x_{il})- u_{il} \right\}$, we observe the following sufficient condition on perturbation $F$ for capturing correlation at each node:

At node $i$, the probability ratio of choosing link $(i,j)$ and $(i, k)$ is:
\begin{align}
    \frac{{x_{ij}}/{\sum_{l:(i,l)} x_{il}}}{{x_{ik}}/{\sum_{l:(i,l)} x_{il}}} = \frac{{x_{ij}}}{{x_{ik}}} = \frac{(F')^{-1}(\eta_i - \eta_j - c_{ij})}{(F')^{-1}(\eta_i - \eta_k - c_{ik})}\label{eq:PURC_prob_ratio}
\end{align}
\begin{proposition}
    The following condition is sufficient for relaxing the independence of irrelevant of alternatives (IIA) assumption:
    \begin{align}
        (F')^{-1}(a + b) \neq (F')^{-1}(a)(F')^{-1}(b)\label{eq:non_IIA_sufficient_condition}
    \end{align}
\end{proposition}
\begin{proof}
    By substituting $\eta_i = \min_{l :(i, l)} \left\{ \eta_l + F'(x_{il})- u_{il} \right\}$ into Eq.~\eqref{eq:PURC_prob_ratio}, we have at node $i$:
        \begin{align}
            \frac{{x_{ij}}}{{x_{ik}}} = \frac{(F')^{-1}(\min_{l :(i, l)} \left\{ \eta_l + F'(x_{il})- u_{il} \right\} - \eta_j + u_{ij})}{(F')^{-1}(\min_{l :(i, l)} \left\{ \eta_l + F'(x_{il})- u_{il} \right\} - \eta_k + u_{ik})}\nonumber
        \end{align}
    If condition~\eqref{eq:non_IIA_sufficient_condition} holds, the probability ratio depends on $\eta_i$, which is the minimum shortest path cost on all links incident node $i$, thus IIA does not hold at node $i$.
\end{proof}
We now show that the link-based multinomial logit model (RL) does not satisfy condition~\eqref{eq:non_IIA_sufficient_condition}, for which relaxation of IIA is not guaranteed. As per \cite{akamatsu1997decomposition}, we can derive the following:
\begin{subequations}
    \begin{align}
        &F(x) = \sum_{(i,j):a} x_{ij} \ln x_{ij} - \sum_{i} \left( \sum_{j:(i, j)} x_{ij} \right) \ln \left( \sum_{j:(i, j)} x_{ij} \right) \\
        &\partial_{{x_{ij}}}F(x) = \ln \left(\frac{x_{ij}}{\sum_{j:(i, j)} x_{ij}}\right)\\
        \Rightarrow\quad &\frac{x_{ij}}{\sum_{j:(i, j)} x_{ij}} = (F')^{-1}(\eta_i - \eta_j + u_{ij}) = \exp(\eta_i - \eta_j + u_{ij}) = \exp(\eta_i)\exp(- \eta_j + u_{ij})\\
        \Rightarrow\quad & \frac{{x_{ij}}/{\sum_{l:(i,l)} x_{il}}}{{x_{ik}}/{\sum_{l:(i,l)} x_{il}}} = \frac{{x_{ij}}}{{x_{ik}}} = \frac{\exp(- \eta_j + u_{ij})}{\exp(- \eta_k + u_{ik})} \label{eq:proportionality}
    \end{align}
\end{subequations}
In contrast, the perturbation functions proposed by~\citep{Fosgerau2021a} satisfy condition~\eqref{eq:non_IIA_sufficient_condition}, whose capability in relaxing IIA has been demonstrated numerically.







\subsection{Substitution between alternative segments}
We first note that equilibrium path flow in perturbed utility stochastic traffic assignment is non-unique, which is illustrated in Figure~\ref{fig:substitution_example}.

\begin{figure}[htb]
    \centering\includegraphics[width=0.8\textwidth]{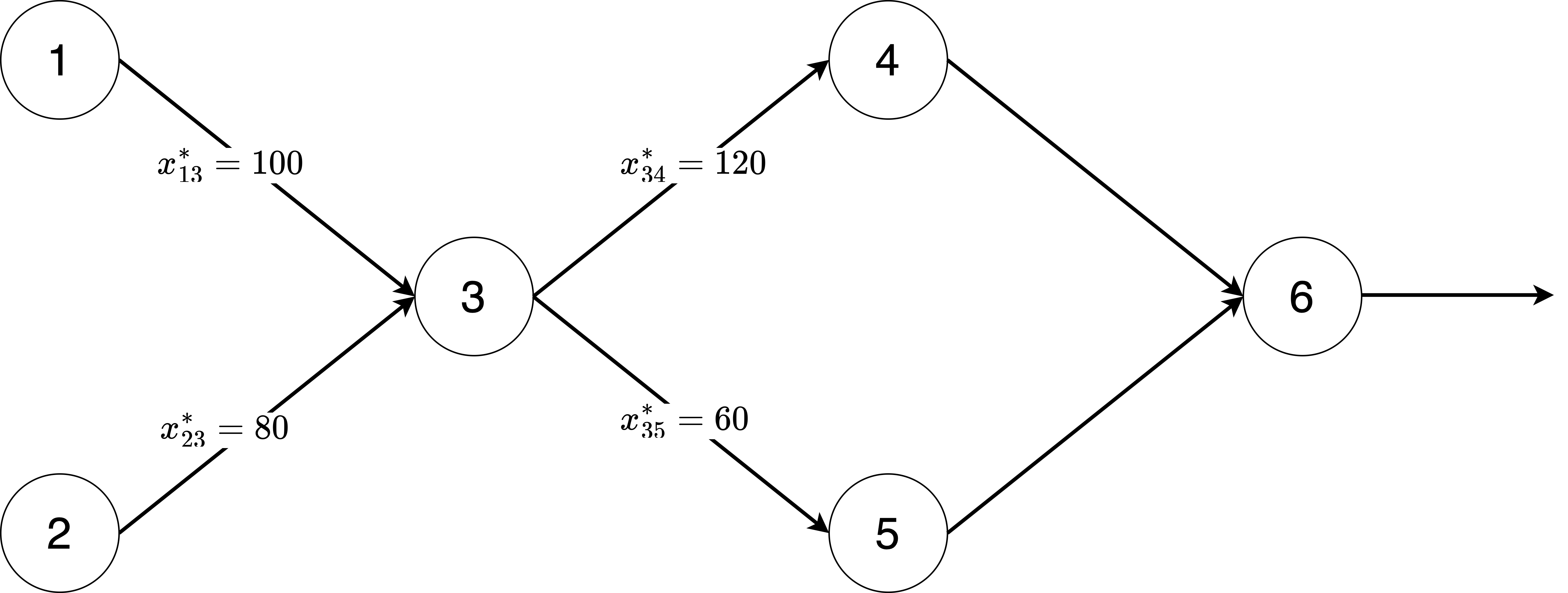}
    \caption{Substitution between alternate segments}
    \label{fig:substitution_example}
\end{figure}

Figure~\ref{fig:substitution_example} shows the equilibrium link flows $x_a^\ast$ of one OD, whose uniqueness has been proved in Lemma~\ref{lemma:TAP_existence_uniqueness}. There are four paths: $r_1:1-3-4-6, r_2:1-3-5-6, r_3:2-3-4-6, r_4:2-3-5-6$ in this example. Clearly, there are more than one set of path flows correspond to the same unique link flow. Consequently, paths are not necessarily substitute to each other, namely, utility increase of one alternative does not necessarily decrease the path choice probability (or equivalently path flow) of all other alternatives in PURC.

In contrast, alternative segments are substitutes to each other. Similar to \cite{bar2010traffic}, we define alternative segments as the ones with identical perturbed costs and have no overlap links. For example, segments $3-4-6$ and $3-5-6$ are alternative segments, whose perturbed costs are the same due to SUE condition. Under Assumptions~\ref{A:1}-\ref{A:2}, costs on alternative segment are strictly monotone, therefore flows on the alternative segments are unique~\citep{Nagurney2009}. Consequently, cost decrease on a segment decreases the probability of choosing its counterpart. We further note that each alternative segment might correspond to more than one path, such as paths $r_1, r_3$ both incident segment $3-4-6$.